\begin{document}

\title{A robust solver for elliptic PDEs in 3D complex geometries}

\author[nyu]{Matthew J. Morse} \ead{mmorse@cs.nyu.edu}
\author[cub]{Abtin Rahimian} \ead{arahimian@acm.org}
\author[nyu]{Denis Zorin} \ead{dzorin@cs.nyu.edu}
\address[nyu]{Courant Institute of Mathematical Sciences, New York
  University, New York, NY 10003}
\address[cub]{Department of Computer Science, University of Colorado - Boulder, Boulder, CO 80309}
\begin{abstract}
  We develop a boundary integral equation solver for elliptic partial differential equations on complex \threed geometries.
    Our method is efficient, high-order accurate and robustly handles complex geometries.
  A key component is our singular and near-singular layer potential evaluation scheme, \qbkix: a simple extrapolation of the solution along a line to the boundary.
  We present a series of geometry-processing algorithms required for \qbkix to run efficiently with accuracy guarantees on arbitrary geometries and an adaptive upsampling scheme based on a iteration-free heuristic for quadrature error.
  We validate the accuracy and performance with a series of numerical tests and compare our approach to a competing local evaluation method.
\end{abstract}
\maketitle

\section{Introduction\label{sec:intro}}
Linear elliptic homogeneous partial differential equations (\pdes) play an important role in modeling many physical interactions, including electrostatics, elastostatics, acoustic scattering, and viscous fluid flow. 
Ideas from potential theory allow us to reformulate the associated boundary value problem (\bvp) as an integral equation \cite{hsiao2008boundary}. 
The solution to the \bvp can then be expressed as a surface convolution against the \pde's fundamental solution called a \textit{layer potential}. 
Discretizing this boundary integral equation (\bie) formulation offers several advantages over commonly used \pde discretization methods such as finite element or finite volume methods.

First, the system of equations uses asymptotically fewer variables because only the boundary of the \pde's domain requires discretization.
There is no need to directly discretize the domain itself, which is often time-consuming and error-prone, especially when complex or unbounded domains are involved. 
This makes the boundary integral formulation well-suited for electromagnetic problems \cite{nedelec2001acoustic} and indispensable for particulate flow simulations with changing, moving, or deforming geometries \cite{pozrikidis-1992-BISMLVF}).
Second, although the algebraic system resulting from discretization of \bie's is dense, efficient methods based on the Fast Multipole Method \cite{greengard1987fast} can solve it in $O(N)$ time.
A suitable integral formulation can yield a well-conditioned system that can be solved using an iterative method like \gmres in relatively few iterations.
Third, high-order quadrature rules can be leveraged to dramatically improve the accuracy of a given discretization size.

For elliptic problems with smooth domain boundaries, fast, high-order methods have a significant advantage over standard methods, drastically reducing the number of degrees of freedom needed to approximate a solution to a given accuracy. 
However, achieving this with a \bie discretization presents a significant challenge.
In particular, integral equation solvers require accurate quadrature rules for \emph{singular} integrals, as
the formulation requires the solution of an integral equation involving the singular fundamental solution of the \pde.
Moreover, if the solution needs to be evaluated arbitrarily close to the boundary, then one must numerically compute \emph{nearly singular} integrals with high-order accuracy (e.g., \cite{bruno2001fast,YBZ,KBGN}).
Precomputing high-order singular/near-singular quadrature weights also presents a considerable problem. 
Such weights necessarily depend on the surface geometry, so each sample point requires a unique set of weights.
Furthermore, the sampling density required for accurate singular/near-singular integration is highly dependent on the boundary geometry.
For example, two nearly touching pieces of the boundary require a sampling density proportional to the distance between them. 
Applying such a fine discretization globally would be prohibitively expensive, highlighting the need for adaptive refinement.

%

\subsection{Contributions}
Our main contribution is a high-order, boundary integral solver for non-oscillatory elliptic \pdes, and experimental evaluation of this solver. 
An earlier parallel version of this method is used in \cite{lu2019scalable} to simulate red blood cell flows through complex blood vessel with high numerical accuracy. 
More specifically, the main features of our solver include: 
\begin{itemize}
  \item {\bf Singular and near-singular quadrature scheme.}
  We introduce an approximation-based singular/near-singular quadrature scheme for single- and double-layer potentials in \threed: after computing the solution at a set of nearby \textit{check points}, placed along a line intersecting the target, we extrapolate the solution to the target point. 
  We have named this scheme \qbkix, for reasons that are apparent from \Cref{fig:qbkix-schematic}.
  In order to ensure accuracy of the scheme for complex geometries, a key component of our scheme is a set of geometric criteria for surface sampling needed for accurate integration.
  
  Our approach is motivated by the near-singular evaluation scheme of \cite{YBZ,QB}, which implements a similar scheme that includes an additional on-surface singular evaluation to allow for interpolation of the solution. 
        We eliminate the need for explicit on-surface singular evaluation.
  An important consequence of this include the use of \emph{smooth} quadrature rules only, removing the need for an explicit singular quadrature scheme. 
  This allows for much greater flexibility in the choice of surface representation (e.g., the representation of \cite{YBZ} was explicitly designed to support singular quadratures). 

\item {\bf Surface representation.} 
    Our quadrature scheme enables us to use standard B\'ezier patches to define the domain boundary, which simplifies the use of the solver on \abbrev{CAD} geometry, increases the efficiency of surface evaluation and simplifies parallelization. 
It also allows for adaptive \emph{quad-trees} of patches to approximate complex surfaces with nonuniform curvature distribution efficiently.
        Our method can be applied to other surface representations with minimal changes. 


\item{\bf Refinement for geometric admissibilty and quadrature accuracy.}
An essential aspect of our method is a set of fast adaptive geometry refinement algorithms to ensure that the assumptions required for the validity and accuracy of \qbkix are satisfied.
These conditions are similar in spirit to \cite{RKO} and \cite{wala20193d}, but adapted to the geometry of our particular quadrature scheme.
To guarantee quadrature accuracy of our method, we detail an adaptive $h$-refinement approach for the integral equation discretization points. 
\end{itemize}

We evaluate \qbkix for a variety of problems on complex geometries to demonstrate high-order convergence and compare to \cite{YBZ}.  

\subsection{Related Work\label{sec:related_work}}
We  restrict our discussion to elliptic \pde solvers in \threed using boundary integral formulations. 
The common schemes to discretize boundary integral equations are the \textit{Galerkin} method, the \textit{collocation} method, and the \textit{\nystrom} method \cite{atkinson2009numerical}.  Galerkin and collocation methods are usually referred as Boundary Element Methods (\bem).
\bem has been applied to a variety of problems in elastodynamics, electromagnetics and acoustics \cite{chaillat2017fast,chaillat2017theory,abduljabbar2019extreme}.
There are a variety of \bem implementations available; one that is most notable is \bem++, which includes high-order elements \cite{smigaj2015solving} with extensions for adaptivity added in \cite{bespalov2019adaptive,betcke2019adaptive}.
%
In this paper, we focus on the \emph{\nystrom} discretization, in which the integral in the equation is replaced by its quadrature approximation.  
The \nystrom method is simple, yet it enables very efficient methods to solve the discretized integral equation.
Compared to \bem methods, \nystrom methods tend to be more efficient, especially for changing or moving surfaces.
However, \nystrom methods are more difficult to apply to non-smooth surfaces (we do not consider high-order methods for surfaces with sharp edges and corners in this work).


The key element of \nystrom methods for \bie equations is efficient quadrature rules for singular and near-singular integrals.
In the \bie literature, such integration schemes fall into one of the several categories: \textit{singularity cancellation},  \textit{asymptotic correction}, \textit{singularity subtraction}, \textit{custom quadratures} or approximation-based quadrature schemes.

\emph{Singularity cancellation schemes} apply a change of variables to remove the singularity in the layer potential, allowing for the application of standard smooth quadrature rules. 
The first polar change of variables was detailed in the context of acoustic scattering \cite{bruno2001fast}, which leveraged a partition of unity and a polar quadrature rule to remove the singularity in the integrand of layer potential.
 the method was extended to open surfaces in \cite{bruno2013high}.
This methodology was applied to general elliptic \pdes in \cite{YBZ} and coupled with the kernel-independent fast multipole method \cite{ying2004kernel} and a general $C^\infty$ surface representation for complex geometries \cite{ying2004simple}. Its advantages and disadvantages compared to \qbkix are discussed in \cref{sec:results}.
Recently, \cite{malhotra2019taylor} demonstrated that the choice of partition of unity function used for the change of variables has a dramatic effect on overall convergence order.
The first singularity cancellation scheme in \threed on general surfaces composed of piecewise smooth triangles was presented in \cite{bremer2012nystrom,bremer2013numerical}. 
\cite{ganesh2004high} introduced a change of variables method for acoustic scattering on \threed surfaces, parametrized by spherical coordinates by integrating over a rotated coordinate system that cancels out the singularity.

\emph{Asymptotic correction methods} study the inaccuracies due to the singular \pde kernel with asymptotic analysis and apply a compensating correction. 
\cite{beale2004grid,beale2016simple,tlupova2019regularized} compute the integral with a regularized kernel and add corrections for regularization and discretization for the single and double layer Laplace kernel in \threed, along with the Stokeslet and stresslet in \threed.
\cite{carvalho2018asymptotic1} computes an asymptotic expansion of the kernel itself, which is used to remove the aliasing error incurred when applying smooth quadrature rules to near-singular layer potentials. 
This method is extended to \threed in \cite{carvalho2018asymptotic} and a complete asymptotic analysis of the double-layer integral is performed in \cite{khatri2020close}.
\emph{Singularity subtraction methods} \cite{jarvenpaa2003singularity, jarvenpaa2006singularity} explicitly subtract the singular component of the integrand analytically, which produces a smooth bounded integral that can be integrated with standard quadrature rules. 
However, the analytic calculations involved in these approaches are often tailored to a particular \pde and require recalculation for each new \pde of interest.

\emph{Custom quadrature rules} aim to integrate a particular family of functions to high-order accuracy. This can allow for arbitrarily accurate and extremely fast singular integration methods, since the quadrature rules can be precomputed and stored  \cite{alpert1999hybrid, xiao2010numerical}. 

Our method falls into the final category: \emph{approximation-based quadrature schemes}. 
The first use of a local expansion to approximate a layer potential near the boundary of a \twod boundary was presented in \cite{barnett2014evaluation}. 
By using a refined, or \textit{upsampled}, global quadrature rule to accurately compute coefficients of a Taylor series, the resulting expansion serves as a reasonable approximation to the solution near the boundary where quadrature rules for smooth functions are inaccurate.
This scheme was then adapted to evaluate the solution both near and on the boundary, called Quadrature by Expansion (\qbx) \cite{KBGN,EGK}. 
The first rigorous error analysis of the truncation error of \qbx was carried out in \cite{EGK}.

A fast implementation of \qbx in \twod, along with a set of geometric constraints required for well-behaved convergence, was presented in \cite{RKO}.
However, the interaction of the expansions of \qbx and the translation operator expansions of the \fmm resulted in a loss of accuracy, which required an artificially high multipole order to compensate for this additional error.
\cite{wala2018fast} addresses this shortcoming by enforcing a confinement criteria on the location of expansion disks relative to \fmm tree boxes.
\cite{aT2} provided extremely tight error heuristics for various kernels and quadrature rules in \twod using contour integration and the asymptotic approach of \cite{elliott2008clenshaw}.
\cite{aT1} then leveraged these estimates in a \qbx algorithm for Laplace and Helmholtz problems in \twod that adaptively selects quadrature upsampling and the expansion order for each \qbx expansion.
In the spirit of \cite{ying2004kernel}, \cite{RBZ} generalizes \qbx to any elliptic \pde by using potential theory to form a local, least-squares solution approximation using only evaluations of the \pde's fundamental solution.

The first extension of \qbx to \threed  was \cite{ST}, where the authors present a \textit{local, target-specific } \qbx method on spheroidal geometries.
In a local \qbx scheme, an upsampled accurate quadrature is used as a local correction to the expansion coefficients computed from the coarse quadrature rule over the boundary.
This is in contrast with a \textit{global} scheme, where the expansion coefficients are computed from the upsampled quadrature with no need for correction.
The first local \qbx scheme appears in \cite{barnett2014evaluation} in \twod, but the notion of local \fmm corrections dates back to earlier work such as \cite{alpert1999hybrid, kapur1997high}.
The expansions in \cite{ST} computed in a target-specific \qbx scheme can only be used to evaluate a single target point, but each expansion can be computed at a lower cost than a regular expansion valid in a disk.
The net effect of both these algorithmic variations are greatly improved constants, which are required for complicated geometries in \threed.
\cite{wala20193d} extends the \qbx-\fmm coupling detailed in \cite{wala2018fast} to \threed surfaces, along with the geometric criteria and algorithms of \cite{RKO} that guarantees accurate quadrature. 
\cite{wala2019optimization} improves upon this by adding target-specific expansions to \cite{wala20193d}, achieving a 40\% speed-up and \cite{wala2020approximation} provides a thorough error analysis of the interaction between computing \qbx expansions and \fmm local expansions.

In addition to techniques described above, a singular quadrature scheme of \cite{HO}, further extended to \twod Stokes flows in \cite{wu2020solution} and to near-singular \threed line integrals in \cite{klinteberg2019accurate},  does not fit into one of the above categories. 
While this method performs exceptionally well in practice, it does not immediately generalize to \threed surfaces in an efficient manner.

Most techniques mentioned above assume smooth domain boundaries or use adative refinement to handle non-smooth features.
There has been a great deal of recent work on special quadratures for regions with corners \cite{SR,S,serkh2018solution,hoskins2019numerical,rachh2017solution,serkh2016solution}.
Although not yet generalized to \threed, this work has the potential to vastly improve the performance of \threed \nystrom boundary integral methods on regions with corners and edges.

A way to avoid singular quadratures entirely is to use the \textit{method of fundamental solutions} (\mfs), which represents the solution as a sum of point charges on an equivalent surface outside of the \pde domain. 
\mfs was successfully applied in \twod \cite{barnett2008stability} and in axis-symmetric \threed problems \cite{liu2016efficient}.
Recently, \cite{gopal2019solving} has introduced an \twod approach similar in spirit to \mfs, but reformulated as a rational approximation problem.
Eliminating the need for singular integration makes these methods advantageous, but placing the point charges robustly can be challenging in practice and general \threed geometries remain a challenge.

We also briefly mention the use of \emph{isogemetric analysis} (\iga)\cite{hughes2005isogeometric} in the context of boundary integral equations.
\iga aims to use  the same basis functions for geometry and solution representation, in particular, similar to our work, reducing the gap between representations
used in CAD, and those needed for high-order \bem.  \iga has  been successfully applied to singular and hypersingular boundary integral equations with a collocation discretization \cite{taus2016isogeometric}.  A \nystrom \iga method coupled with a regularized quadrature scheme is detailed in \cite{zechner2016isogeometric}.

The rest of the paper is organized as follows: 
In \cref{sec:formulation}, we briefly summarize the problem formulation, geometry representation and discretization.
In \cref{sec:algo}, we detail our singular evaluation scheme and with algorithms to enforce admissibility, adaptively upsample the boundary discretization, and query surface geometry to evaluate singular/near-singular integrals.
In \cref{sec:error}, we provide error estimates for \qbkix. 
In \cref{sec:complexity}, we summarize the complexity of each of the algorithms described in \cref{sec:algo}.
In \cref{sec:results}, we detail convergence tests of our singular evaluation scheme and compare against other state-of-the-art methods.

\section{Formulation\label{sec:formulation}}
\subsection{Problem Setup}

We restrict our focus to interior
Dirichlet boundary value problems of the form
\begin{align}
  L u(\vx) =  0,& \quad \vx \in \Omega,\\
  u(\vx) =  f(\vx),& \quad \vx \in \partial\Omega = \Gamma,
  \label{eq:pde}
\end{align}
with multiply- or singly-connected domain $\Omega$ of arbitrary genus. 
Our approach applies directly to standard integral equation formulations of exterior Dirichlet and Neumann problems; we include results for an exterior Dirichlet problem in \cref{sec:results-torii}.
Here $L$ is a linear elliptic operator and $f$ is at least $C^k$.
While our method can be applied to any non-oscillatory elliptic \pde, we use the following equations in our examples: 
\begin{equation}
  Lu =\begin{cases}
    \Delta u & \text{Laplace}\\
    \Delta u  - \nabla p, \quad \nabla \cdot u = 0 & \text{Stokes}\\
    \Delta u  + \frac{1}{1-2\nu}\nabla \nabla\cdot u & \text{Navier (linear elasticity)}\\
  \end{cases}
  \label{eq:pdes}
\end{equation}

We follow the approach of \cite{YBZ}. 
We can express  the solution at a point $\vx \in \Omega$ in terms of the double-layer potential
\begin{equation}
  u(\vx) = D[\phi](\vx) = \int_\Gamma \frac{\partial G(\vx,\vy)}{\partial \vn(\vy)} \phi(\vy) d\vy_\Gamma,
  \label{eq:double_layer}
\end{equation}
where $G(\vx,\vy)$ is the \textit{fundamental solution} or \textit{kernel} of \cref{eq:pde}, $\vn(\vy)$ is the normal at $\vy$ on $\Gamma$ pointing into the exterior of $\Omega$, and $\phi$ is an unknown function, or \textit{density}, defined on $\Gamma$.
We list the kernels associated with the \abbrev{PDE}s\xspace in \cref{eq:pdes} in \cite[Section 1]{morse2020bsupplementary}.
Using the jump relations for the interior and exterior limits of $u(\vx)$ as $\vx$ tends towards $\Gamma$ \cite{K,mikhlin2014integral,pozrikidis1992boundary,parton1982integral}, we know that \cref{eq:double_layer} is a solution to \cref{eq:pde} if $\phi$ satisfies 
\begin{equation}
  \left(\frac{1}{2}I + D + M\right)[\phi](\vx) = f(\vx), \vx \in \Gamma
  \label{eq:int-eq}
\end{equation}
with identity operator $I$. 
We will refer to $\phi$ as the \textit{density} and $u(\vx)$ as the \textit{potential} at $\vx$.
The double-layer integrals in this equation are \textit{singular}, due to the singularity in the integrand of \cref{eq:double_layer}. 
Additionally, as $\vx$ approaches $\Gamma$, \cref{eq:double_layer} becomes a \textit{nearly singular} integral.

The operator $M$ completes the rank of $\frac{1}{2}I + D$ to ensure invertibility of \cref{eq:int-eq}. 
If $\frac{1}{2}I + D$ is full-rank, $M = 0$.
When $\frac{1}{2}I + D$ has a non-trivial null space, $M$ accounts for the additional constraints to complete the rank of the left-hand side of \cref{eq:int-eq}.
For example, for the exterior Laplace problem on $\ell$ multiply-connected domains, the null space of $\frac{1}{2}I + D$ has dimension $\ell$ \cite{ST}.
The full set of cases for each kernel is considered in this work and their corresponding values of $M$ have been detailed in \cite{YBZ}. 


\subsection{Geometry representation \label{sec:geom-def}}

\begin{figure}[!htb]
  \centering
  \begin{minipage}{\textwidth}
      \includegraphics[width=\linewidth]{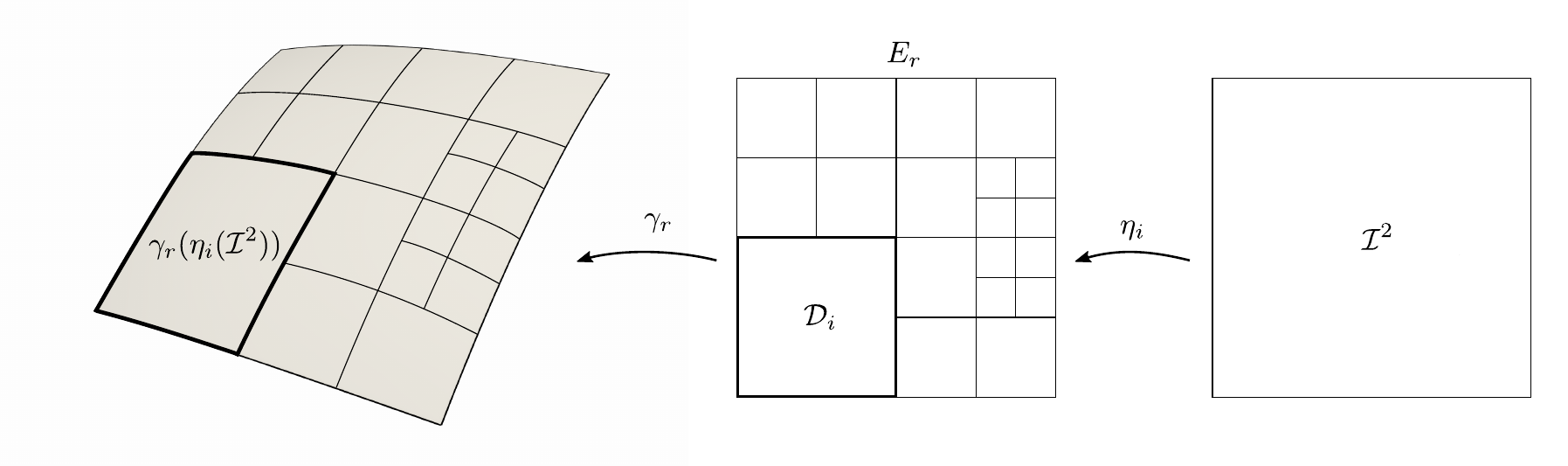}
  \end{minipage}\hfill
  \mcaption{fig:geometry-representation}{Patch Quadrisection}{
      Right: the standard domain $\mathcal{I}^2$ of a single surface or quadrature patch.
    Middle: a collection of subdomains $\mathcal{D}_i$ of $E_r$, produced by quadrisection. 
    Each $\mathcal{D}_i$ corresponds to a map $\eta_i$ such that $\mathcal{D}_i = \eta_i(\mathcal{I}^2)$; a single $\mathcal{D}_i$ is highlighted in bold.
    Left: the image of $E_r$ under the patch $\gamma_r$. 
    The final image of each subdomain is outlined, with the image of $\mathcal{D}_i$ in bold.
  }
\end{figure}
We assume that the smooth domain boundary $\Gamma$ is given by a \textit{quadrilateral mesh} consisting of quadrilateral faces $Q_r$, referred to as \textit{quads}.
Each quad is associated with a parametric domain $\I^2 =  [-1,1]^2 = E_r$, along with embeddings $\gamma_r : E_r \to \mathbb{R}^3$ for each quad such that $Q_r = \gamma_r(E_r)$.  
We assume that the quad mesh is \textit{conforming}, i.e., two non-disjoint faces either share a whole edge or a single vertex; examples of this are shown in \Cref{fig:greens-id-test-cases,fig:solver-conv-test-cases}.
We assume that no two images $\gamma_r(E_r)$ intersect, except along the shared edge or vertex.
The surface $\Gamma$ is the union of patches $\cup_r \gamma_r(E_r) = \cup_r Q_r$.
We also assume that $\Gamma$ is sufficiently smooth to recover the solution of \cref{eq:pde} up to the boundary \cite{K} and is at least $C^k$. 

To represent the surface geometry, we approximate $\Gamma$ with a collection of \emph{B\'ezier patches}, given by a linear combination of tensor-product Bernstein polynomials
\begin{equation}
    \vP_i(s,t) = \sum_{\ell =0}^n\sum_{m =0}^n \vector{a}^{(i)}_{\ell m }B_\ell^n(s)B_m^n(t),
  \label{eq:tensor-product}
\end{equation}
where  $B_\ell^n(t) = \binom{n}{\ell} t^{n-\ell}(1-t)^{\ell}$ for each $\ell$ are the $n$-th degree Bernstein polynomials, $i$ denotes the index of a patch in the collection and $\vector{a}_{\ell m}^{(i)} \in \mathbb{R}^3$.
Each patch $\vP$ is a vector function from $\I^2$ to $\mathbb{R}^3$, so $s,t\in [-1,1]$.
We will refer to this approximation of $\Gamma$ as $\hat{\Gamma}$. 

The domain $E_r$ of each embedding function $\gamma_r$ is adaptively refined using \emph{quadrisection}, i.e., splitting a square domain into four square subdomains of equal size. 
Quadrisection induces a \textit{quadtree} structure on each $E_r$. 
The root of the quadtree is the original domain $\I^2$ and each node of the tree is related by a single quadrisection of a subdomain of $E_r$. 
The leaves of the quadtree form a collection of subdomains $\mathcal{D}_i$ whose union equals $E_r$, as shown in \cref{fig:geometry-representation}-middle.
Given an indexing scheme of all $\mathcal{D}_i$'s over all $E_r$'s, we define the function $r(i)$ that maps the leaf node index $i$ to its root node index $r$ in the quadtree forest, indicating that $\mathcal{D}_i \subset E_r$. 
For each $r$, $E_r$ can have a distinct sequence of associated quadrisections and therefore a distinct quadtree structure.
We refer to the process of \textit{refinement} or \textit{refining a patch $\vP$} as the construction of such quadtrees for each $E_r$ subject to some set of criteria.

On each $\mathcal{D}_i$ at the quadtree leaves, we define a B\'ezier patch and reparametrize each patch over $\I^2$ by defining the affine map $\eta_i: \I^2 \to E_{r(i)}$ such that $\eta_i(\I^2) = \mathcal{D}_i \subseteq E_{r(i)}$.
It follows that the set of subdomains $\{ \eta_i(\I^2) \,| \,r(i) = \kappa\}$ form a cover of $E_\kappa$ and $\{ \gamma_\kappa(\eta_i(\I^2))\, | \,r(i) = \kappa\}$ likewise covers $\gamma_\kappa(E_\kappa)$.
We summarize this setup in \Cref{fig:geometry-representation}; examples of surfaces of this form can be seen in \Cref{fig:greens-id-test-cases,fig:solver-conv-test-cases,fig:torii,fig:vessel}.

\subsection{Problem discretization \label{sec:discretization}}
We use two collections of patches in the form described above: $\Pcoarse$ and $\Pfine$.
The patches in $\Pcoarse$, called \emph{surface patches}, determine $\Gammah$ from $\Gamma$ and the set of patches $\Pfine$, called \emph{quadrature patches}, are obtained by further quadrisection of the surface patches in $\Pcoarse$.
The geometry of $\Gammah$ is not changed by this additional refinement of $\Pcoarse$, but the total number of subdomains $E_{r(i)}$ is increased.
We will detail the geometric criteria that $\Pcoarse$ and $\Pfine$ must satisfy in \cref{sec:geom_criteria}.
Discretizing $\Gammah$ with with a quadrature rule based on $\Pfine$ results in a denser sampling of $\Gammah$ than a similar discretization of $\Pcoarse$.
We will refer to $\Pcoarse$ as the \textit{coarse discretization} of $\hat{\Gamma}$ and $\Pfine$ as the \textit{upsampled} or \textit{fine discretization} of $\hat{\Gamma}$.

We index the patches in $\vP_i \in \Pcoarse$ by $i = 1,\hdots N$; we can then rewrite \cref{eq:double_layer} as a sum of integrals over surface patches:
\begin{equation}
  u(\vx) = \sum_{i=1}^N\int_{\vP_i} \frac{\partial G(\vx,\vy)}{\partial \vn(\vy)} \phi(\vy) d\vy_{\vP_i}.
  \label{eq:double_layer_patches} 
\end{equation}






We discretize functions defined on $\Gammah$, such as
\cref{eq:double_layer_patches}, at $q$-node composite tensor-product
Clenshaw-Curtis quadrature points on $\I^2$ of patches in $\Pcoarse$.  
We refer to these points and weights on a single patch $\vP_i$ as $x_j$ and $w_j^{\lbl{CC}}$ respectively, for $j = 1\ldots q^2$. 
The quadrature point $\vy_{ij}$ from $\vP_i$ is defined as $\vy_{ij} = \vP_{i}(\eta_i(x_j))$. 
We assume that the boundary condition $f$ is given by a black-box evaluator on $\mathbb{R}^3$ that can be used to obtain values at $\vy_{ij}$.
For clarity, we reindex the surface points by a global index $I = 1, \hdots, q^2N$.
We discretize the double layer integral \cref{eq:double_layer_patches} on $\Pcoarse$ to approximate the solution $u(\vx)$:
\begin{equation}
    u(\vx,\Pcoarse) \approx \hat{u}(\vx,\Pcoarse) = \sum_{i=1}^N\sum_{j=1}^{q^2} \frac{\partial G(\vx,\vy_{ij})}{\partial \vn(\vy_{ij})} \phi_{ij} \sqrt{g_{ij}}w_j^{\lbl{CC}} 
    = \sum_{I=1}^{q^2N}\frac{\partial G(\vx,\vy_I)}{\partial \vn(\vy_I)} \phi_I \hat{w}_I
  \label{eq:double_layer_disc}
\end{equation}
with $g_{ij}$ being the determinant of the metric tensor of $\vP_i$ at $x_j$ and $\hat{w}_{i\cdot q^2+j} = \sqrt{g_{ij}}w_{j}^{\lbl{CC}}$.
In other words, $\hat{u}(\vx, \Pcoarse) = \hat{D}[\phi](\vx)$, where $\hat{D}[\phi](\vx) \approx D[\phi](\vx)$.

We can also discretize functions with tensor-product Clenshaw-Curtis nodes on the domains of patches in $\Pfine$.
The values of functions on $\Pfine$ are \emph{interpolated} from their values on the quadrature nodes of $\Pcoarse$ rather than being computed directly on $\Pfine$.
We call this interpolation from $\Pcoarse$ to $\Pfine$ \textit{upsampling}.
We denote the quadrature nodes and weights on $\Pfine$ by $\tilde{x}_j$ and $\tilde{w}_j$ with a similar global index $J$ and refer to them as the \textit{upsampled} nodes and weights.
Identical formulas are used for computing quadrature on $\Pfine$ with the nodes and weights $\tilde{x}_j$, $\tilde{w}_j$ on $\Pfine$, denoted $u(\vx,\Pfine)$ and $\hat{u}(\vx,\Pfine)$, repsectively.

In the next section, we describe the algorithm to compute an accurate approximation to the singular/near-singular double-layer integral in \cref{eq:double_layer}, using a quadrature rule for smooth functions (\cref{eq:double_layer_disc}) as a building block. 
This algorithm allows us to compute the matrix-vector products $A\phi$, for a vector of values $\phi$ defined at the quadrature points $\vy_I$, where $A$ is the discrete operator obtained from the left-hand side of \cref{eq:int-eq} after approximating $D[\phi](\vy)$ with the singular integration scheme.
As a result, we can solve the linear system using \gmres, which only requires a matrix-vector product 
\begin{equation}
  A\phi = f,
  \label{eq:linear_system}
\end{equation}
where $f$ is the boundary condition sampled at the points $\vy_I$. The evaluation of these integrals is accelerated in a standard manner using the fast multipole method (\fmm)\cite{MB,ying2004kernel,greengard1987fast}.

\section{Algorithms\label{sec:algo}}

We now detail a set of algorithms to solve the integral equation in \cref{eq:int-eq} and evaluate the solution via the double layer integral in \cref{eq:double_layer} at a given target point $\vx \in \Omega$.
As described in the previous section, both solving \cref{eq:int-eq} and evaluating \cref{eq:double_layer} require accurate evaluation of singular/near-singular integrals of functions defined on the surface $\Gammah$.
We first outline our unified singular/near-singular integration scheme, \qbkix, its relation to existing approximation-based quadrature methods and geometric problems that can impede accurate solution evaluation.
We then describe two geometry preprocessing algorithms, \textit{admissibility refinement} and \textit{adaptive upsampling}, that address these issues to obtain the sets of patches $\Pcoarse$ and $\Pfine$ used by \qbkix.

\subsection{Singular and Near-Singular Evaluation \label{sec:singular-eval}}

We begin with an outline of the algorithm.
For a point $\vsx \in \Gammah$ on a patch $\vP$ from $\Pcoarse$ that is closest to $\vx$, we first upsample the density $\phi$ from $\Pcoarse$ to $\Pfine$ and compute the solution at a set of points $\vc_s$, $s = 1, \hdots p$ called \emph{check points}, sampled along the surface normal at $\vsx$ away from $\Gammah$. 
We use \cref{eq:double_layer_disc} to approximate the solution at the check points.
We then extrapolate the solution to $\vx$.

For a given surface or quadrature patch $\vP: \I^2 \rightarrow \mathbb{R}^3$,  we define the \textit{characteristic length} $L(P)$ as the square root of the surface area of $\vP$, i.e., $L(\vP) = \sqrt{\int_{\vP}d\vy_{\vP}}$.
We use  $L = L(\vP)$ or $L_\vy$ for $\vy \in \vP(D)$ to denote the characteristic length when $\vP$ is clear from context.
For a point $\vx \in \Omega$, we assume that there is a single closest point $\vsx \in \Gammah$ to $\vx$; all points to which the algorithm is applied will have this property by construction.  
Note that $\vn(\vsx)$, the vector normal to $\Gammah$ at $\vsx$, is chosen to point outside of $\Omega$.


We define three zones in $\Omega$ for which \cref{eq:double_layer} is evaluated differently in terms of \cref{eq:double_layer_disc} and the desired solution accuracy $\etrg$ .
The \emph{far field}  $\Omega_F = \{\vx \in \Omega \,|\, \|u(\vx) - \hat{u}(\vx;\Pcoarse)\|_2 \leq \etrg\}$, where the quadrature rule corresponding to $\Pcoarse$ is sufficiently accurate, and the \emph{intermediate field}  $\Omega_I= \{\vx \in \Omega \,|\, \|u(\vx) - \hat{u}(\vx;\Pfine)\|_2 \leq \etrg\}$, where quadrature over $\Pfine$ is sufficiently accurate.
The remainder of $\Omega$ is the \emph{near field} $\Omega_N = \Omega \setminus \Omega_I$.




\paragraph*{Non-singular integration}
To compute the solution at points $\vx$ in $\Omega_F$, \cref{eq:double_layer_disc} is accurate to $\etrg$, so we can simply compute $\hat{u}(\vx, \Pcoarse)$ directly.
Similarly for points in $\Omega_I \setminus \Omega_F$, we know by definition that $\hat{u}(\vx, \Pfine)$ is sufficiently accurate, so it can also be applied directly. 

\paragraph*{Singular/near-singular integration algorithm}

\begin{figure}[!htb]
  \begin{minipage}{\textwidth}
  \centering
      \includegraphics[width=.7\linewidth]{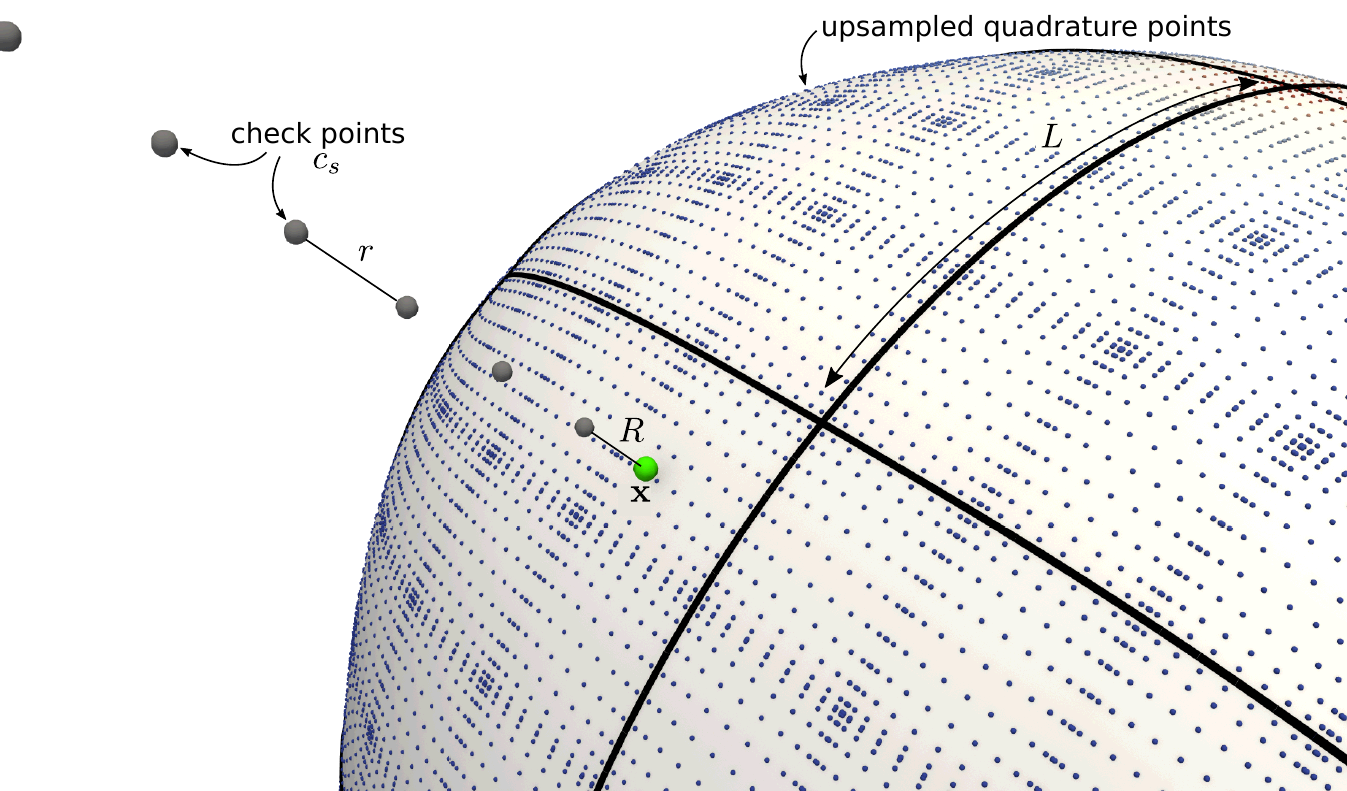}
  \end{minipage}\hfill
  \mcaption{fig:qbkix-schematic}{Schematic of singular/near-singular
  evaluation}{A small piece of a boundary $\Gammah$ is shown, along with the set of
  patches $\Pcoarse$ (patch boundaries are drawn in black). The target point $\vx$, in this
  case on $\Gammah$, is shown in green. The solution is evaluated
  at the check points $\vc_s$ (gray points off-surface) using the fine
  discretization $\Pfine$ (small dots on-surface). The distance from the first
  check point $\vc_0$ to $\Gammah$ is $R$ and the distance between consecutive
  check points $\vc_i$ and $\vc_{i+1}$ is $r$. In this example, $\Pfine$ is computed
  from $\Pcoarse$ with two levels of uniform quadrisection, producing 16 times
  more patches. The patch length $L$ is roughly proportional to the average edge
  length of the patch.}
\end{figure}

For the remaining points in $\Omega_N$, we need an alternative means of evaluating the solution.
In the spirit of the near-singular evaluation method of \cite{YBZ},  we construct a set of \textit{check points} $\vc_0, \hdots, \vc_p$ in $\Omega_I$ along a line intersecting $\vx$ to approximate the solution near $\vx$. 
However, instead of interpolating the solution as in \cite{YBZ}, we instead extrapolate the solution from the check points to $\vx$.
We define two distances relative to $\vsx$: $R(\vsx) =b L_\vsx = \|\vc_0 - \vsx\|_2$, the distance from the first check point $\vc_0$ to $\Gammah$,  and $r(\vsx) =a L_\vsx = \|\vc_i - \vc_{i+1}\|_2$, the distance between consecutive check points.
We assume  $0<a,b <1$.

The overall algorithm for the unified singular/near-singular evaluation scheme is as follows.
A schematic for \qbkix is depicted in \Cref{fig:qbkix-schematic}.

\begin{enumerate}
  \item Find the closest point $\vsx$ on $\Gammah$ to $\vx$.
  \item Given values $a$ and $b$, generate check points $C = \{\vc_0, \hdots, \vc_{p}\}$ 
    \begin{equation}
      \vc_s = \vsx -(R(\vsx) +s r(\vsx)) \vn(\vsx), \quad s=0, \hdots, p
      \label{eq:check}
    \end{equation}
    The center of mass of these check points $\vhc$ is called the \textit{check center} for $\vx$.
    Note that $\Pfine$ must satisfy the condition that $\vc_s$ are in $\Omega_I$ for a given choice of $a$ and $b$.
\item Upsample $\phi$. 
  We interpolate the density values $\phi_I$ at $x_I$ on patches in $\Pcoarse$ to quadrature points $\tilde{x}_J$ on patches in $\Pfine$ 
  with global indices $I$ and $J$ on $\Pcoarse$ and $\Pfine$ respectively.
  If a patch $\vP_i$ in $\Pcoarse$ is split into $m_i$ patches in $\Pfine$, we are interpolating from $q^2$ points to $m_iq^2$ points.
  \item Evaluate the potential at check points via smooth quadrature with the upsampled density, i.e. evaluate $\hat{u}(\vc_s) = \hat{u}(\vc_s, \Pfine)$ for $s=0,\hdots, p$.
  \item Compute a Lagrange interpolant $\tilde{u}$ through the check points $\vc_0,\hdots, \vc_p$ and values $\hat{u}(\vc_0), \hdots, \hat{u}(\vc_{p})$ and evaluate at the interpolant at $\vx$:
      \begin{equation}
          \tilde{u}(\vx) = \sum_{s=0}^p \hat{u}(\vc_s)\ell_s(t_\vx),
      \end{equation}
        where $\ell_s(\vx)$ is the $s$th Lagrange basis function through the points $\vc_0,\hdots, \vc_p$, and $t_\vx\in \mathbb{R}$ is such that $\vx = \vsx - t_\vx\vn(\vsx)$ (see \cref{fig:extrap-err-setup} for a schematic of the check points).
    Since $\vx$ lies between $\vc_0$ and $\Gammah$, we are extrapolating when computing $\tilde{u}(\vx)$.
\end{enumerate}

\paragraph*{Ill-conditioning of the discrete integral operator}
This evaluation scheme can be used directly to extrapolate all the way to the surface and obtain the
values of the singular integral in \cref{eq:int-eq}.
However, in practice, due to a distorted eigenspectrum of this approximate operator, \gmres tends to stagnate at a level of error corresponding to the accuracy of \qbkix when it is used to compute the matrix-vector product.
This is a well-known phenomenon of approximation-based singular quadrature schemes; \cite[Section 3.5]{KBGN}\cite[Section 4.2]{RBZ} present a more detailed study.
To address this, we average the interior and exterior limits of the solution at the quadrature nodes, computed via \qbkix, to compute the on-surface potential and add $\frac{1}{2}I$ to produce the interior limit.
This shifts the clustering of eigenvalues from around zero to around $\frac{1}{2}$, which is ideal from the perspective of \gmres.
We call this \textit{two-sided} \qbkix, while the standard version described above is called \textit{one-sided} \qbkix.
We observe stable and consistent convergence of \gmres when two-sided \qbkix is used to evaluate the matrix-vector multiply to solve \cref{eq:linear_system}. 
In light of this, we always use two-sided \qbkix within \gmres and set the stopping tolerance for \gmres to $\err{\gmres}=10^{-12}$, regardless of the geometry, boundary condition or  quadrature order. 

\subsection{Geometric criteria for accurate quadrature\label{sec:geom_criteria}}
The accuracy of the method outlined above is controlled by two competing error terms: \textit{quadrature error} incurred from approximating the layer potential \cref{eq:double_layer} with \cref{eq:double_layer_disc} in Step 4 and \textit{extrapolation error} due to approximating the singular integral with an extratpolated value in Step 5.
Both errors are determined by the location of check points relative to the patches in $\Pcoarse$ and $\Pfine$ (see \Cref{heuristic:error_quad_high_order,thm:extrap_error}). 

\begin{figure}[!htb]
  \centering
  \setlength\figureheight{1.9in}
  \setlength\figurewidth{2.1in}
  \begin{minipage}{.33\textwidth}
      \includegraphics[width=\linewidth]{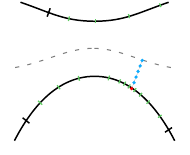}
  \end{minipage}\hfill
  \begin{minipage}{.33\textwidth}
    \includegraphics[width=\linewidth]{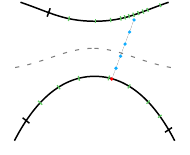}
  \end{minipage}\hfill
  \begin{minipage}{.33\textwidth}
    \includegraphics[width=\linewidth]{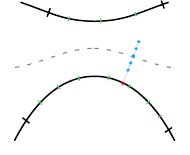}
  \end{minipage}\hfill
  \mcaption{fig:admissibility-motivation}{Possible check point configurations}{A \twod  example depicting three choices of $a$ and $b$ in \cref{eq:check}. 
 Shown is the boundary $\Gammah$, with black tick marks denoting patch boundaries of $\Pcoarse$, green tick marks denoting patch boundaries of $\Pfine$, the target point (red dots), its check points (blue dots) along the normal closest to the target point, and the medial axis of $\Gammah$ (gray dotted line).
Large (left) and small (middle) values of $a$ and $b$ can cause clustering of check points near to $\Gammah$, which requires large amounts of upsampling to compute the potential accurately. Using the medial axis as a heuristic to for admissibility (right), we can minimize the amount of adaptive upsampling required.}
\end{figure}
In \Cref{fig:admissibility-motivation}, we show three examples of different choices of check point locations to evaluate the potential at a point with \qbkix. 
In \cref{fig:admissibility-motivation}-left, $\vc_0$ is placed close to the target point, while in \cref{fig:admissibility-motivation}-middle, $\vc_0$ is far from the target point, but $\vc_p$ is close to a non-local piece of $\Gammah$. 
Both cases will require excessive refinement of $\Pcoarse$ in order to resolve \cref{eq:double_layer_disc} accurately with $\Pfine$.
On the other hand, in \cref{fig:admissibility-motivation}-right, we can either perform one refinement step on $\Pcoarse$ or adjust $a$ and $b$, which will result in fewer patches in $\Pfine$, and therefore provide a faster integral evaluation, while maintaining accuracy.

In an attempt to strike this balance between speed and accuracy, we need certain constraints on the geometry of $\Gammah$ to ensure the efficient and accurate application of \qbkix, which we impose on the patch sets $\Pcoarse$ and $\Pfine$.
We will first outline our constraints on the quadrature patch sets $\Pcoarse$ and $\Pfine$ which allow for accurate evaluation with \qbkix.

\subsubsection{Admissibility criteria\label{sec:admissible}}
A set of patches $\qP$ is \textit{admissibile} if the following statements are satisfied on each quadrature patch in $\qP$:
\begin{criteria}
  \item The error of a surface patch $\vP_i$ approximating an embedding $\gamma_r$ is below some absolute target accuracy $\err{g}$ \label{criteria:1}
  \item The interpolation error of the boundary condition $f$ is below some absolute target accuracy $\err{f}$ \label{criteria:2}
  \item For each check center $\vhc_j$ corresponding to the quadrature point $\vy_j$ on the surface, the closest point on $\hat{\Gamma}$ to $\vhc_j$ is $\vy_j$. \label{criteria:3}
\end{criteria}

\Cref{criteria:1} is required to ensure that $\Gammah$ approximates $\Gamma$ with sufficient accuracy to solve the integral equation.
We discuss how to choose $\err{g}$ in \cite[Section 6]{morse2020bsupplementary}; for the tests in this paper, we simply choose $\err{g} < \err{target}$.
\Cref{criteria:2} guarantees that $f$ can be represented at least as accurately as the desired solution accuracy.
We therefore similarly choose $\err{f} < \etrg$. 
\Cref{criteria:3}  balances the competing geometric constraints of cost and accuracy by flexibly placing check points as far as possible from $\Gammah$ without causing too much upsampling on other patches.
If a check point $\vc$ constructed from a surface patch $\vP$ is too close to another surface patch $\vP'$, \Cref{criteria:3} will indicate that $\vP$ is inadmissible. 
If $\vP$ is subdivided into its children, new check points $\vc^\prime$ generated from these children of $\vP$ will be closer to $\vP$ and further from $\vP'$.
Since check points are placed at distances proportional to $L(\vP)$, repeated refinement of $\vP$ will eventually satisfy \Cref{criteria:3}. 

\subsubsection{Upsampling criteria\label{sec:adaptive_upsampling}}
Once we have a set of admissible surface patches satisfying \Cref{criteria:1,criteria:2,criteria:3}, we need to determine the upsampled quadrature patches $\Pfine$ that ensure that the check points generated from $\Pcoarse$ are in $\Omega_I$, i.e., $\|u(\vc) - \hat{u}(\vc, \Pfine)\| < \etrg$.
To achieve this, we need a criterion to determine which patches are ``too close'' to a given check point for the error to be below $\err{target}$.
We make the following assumption about the accuracy of our smooth quadrature rule: \textit{\cref{eq:double_layer_disc} is accurate to $\err{target}$ at points further than  $L(\vP)$ from $\vP$, for $\err{target} > 10^{-12}$}.
This is motivated by \cite{aT2,barnett2014evaluation}, which demonstrate the rapid convergence of the layer potential quadrature error with respect to $\|\vx - \vsx\|_2$.  
For sufficiently high quadrature orders, such as $q=20$, this assumption seems to hold in practice.
We say that a point $\vx$ is \textit{near} to $\vP$ if the distance from $\vx$ to $\vP$ is less than $L(\vP)$; otherwise, $\vx$ is \textit{far} from $\vP$.
We would like all check points required for the singular/near-singular evaluation of the discretization of \cref{eq:double_layer} using \qbkix to be far from all patches in $\Pfine$.
If this is satisfied, then we know that the Clenshaw-Curtis quadrature rule will be accurate to $10^{-12}$ at each check point.

\subsection{Refinement algorithm preliminaries}
Computing the distance from a check point to a given patch is a fundamental step in verifying the constraints on $\Pcoarse$ and $\Pfine$ from \cref{sec:admissible,sec:adaptive_upsampling}. 
Before detailing our refinement algorithms to enforce these criteria, we introduce several geometric algorithms and data structures that will be used to compute the closest point on piecewise polynomial surfaces.

\subsubsection{\aabb trees\label{sec:aabb_trees}} 
In order to implement our algorithms to enforce admissibility efficiently, we use a fast spatial data structure to find the patches that are close to a query point $\vx$.
In \cite{RKO, wala20193d}, the quadtree and octree within an \fmm is extended to support the geometric queries needed for a fast \qbx  algorithm.
In this work, we use an axis-aligned bounding box (\aabb) tree, which is a type of bounding volume hierarchy \cite{samet2006foundations}, implemented in \texttt{geogram} \cite{geogram}.
An \aabb is a tree with nodes corresponding to bounding boxes and leaves corresponding to bounding boxes containing single objects. 
A bounding box $B_0$ is a child of another box $B_1$ if $B_0 \subset B_1$; the root node is a bounding box of the entire domain of interest.
Operations supported by \aabb trees include: (i) finding all bounding boxes containing a query point, (ii) finding all bounding boxes that intersect another query box, (iii) finding the closest triangle to a query point (because triangles have trivial bounding boxes). 
By decoupling geometric queries from fast summation, the individual algorithms can be more thoroughly optimized, in exchange for the additional memory overhead of maintaining two distinct data structures.
The query algorithm presented in \cite{lu2019scalable} likely has better parallel scalability, but \aabb trees are faster for small to medium problem sizes on a single machine due to less redundant computation.

To define an \aabb tree for our patch-based surface $\Gammah$, we make use of the following fact: the control points of a B\'ezier surface ($\vector{a}_{\ell m}$'s from \cref{eq:tensor-product}) form a convex hull around the surface that they define \cite{F}.
As a result, we can compute a bounding box of a surface or quadrature patch $\vP$ directly from the B\'ezier coefficients simply by computing the maximum and minimum values of each component of the $\vector{a}_{\ell m}$'s, as shown in \cref{fig:patch-coeffs-bbox}-middle.
This bounding box can then be inserted into the \aabb tree as a proxy for a surface or quadrature patch.
\begin{figure}[!htb]
  \centering
  \setlength\figureheight{1.9in}
  \setlength\figurewidth{2.1in}
  \begin{minipage}{.33\textwidth}
      \includegraphics[width=\linewidth]{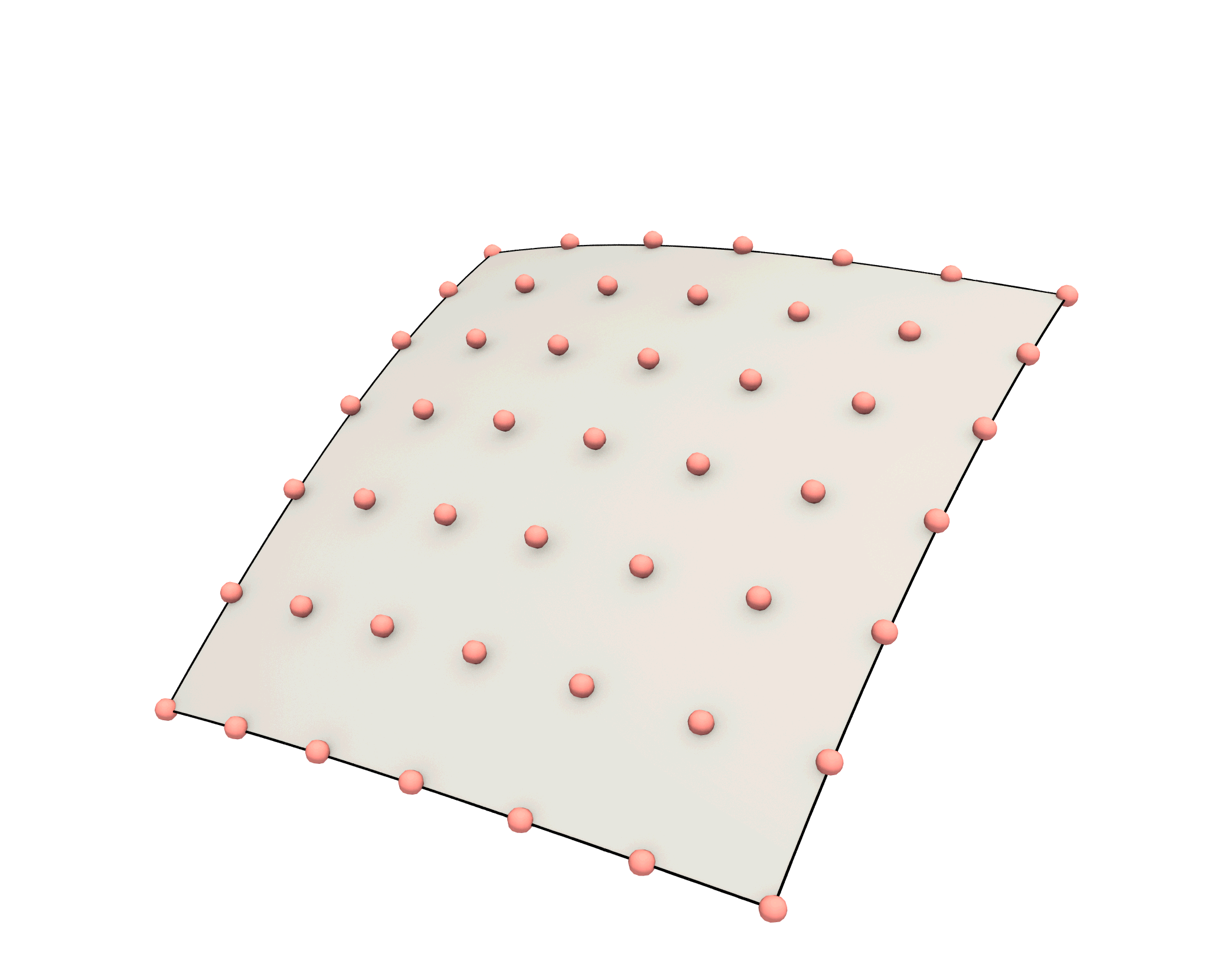}
  \end{minipage}\hfill
  \begin{minipage}{.33\textwidth}
    \includegraphics[width=\linewidth]{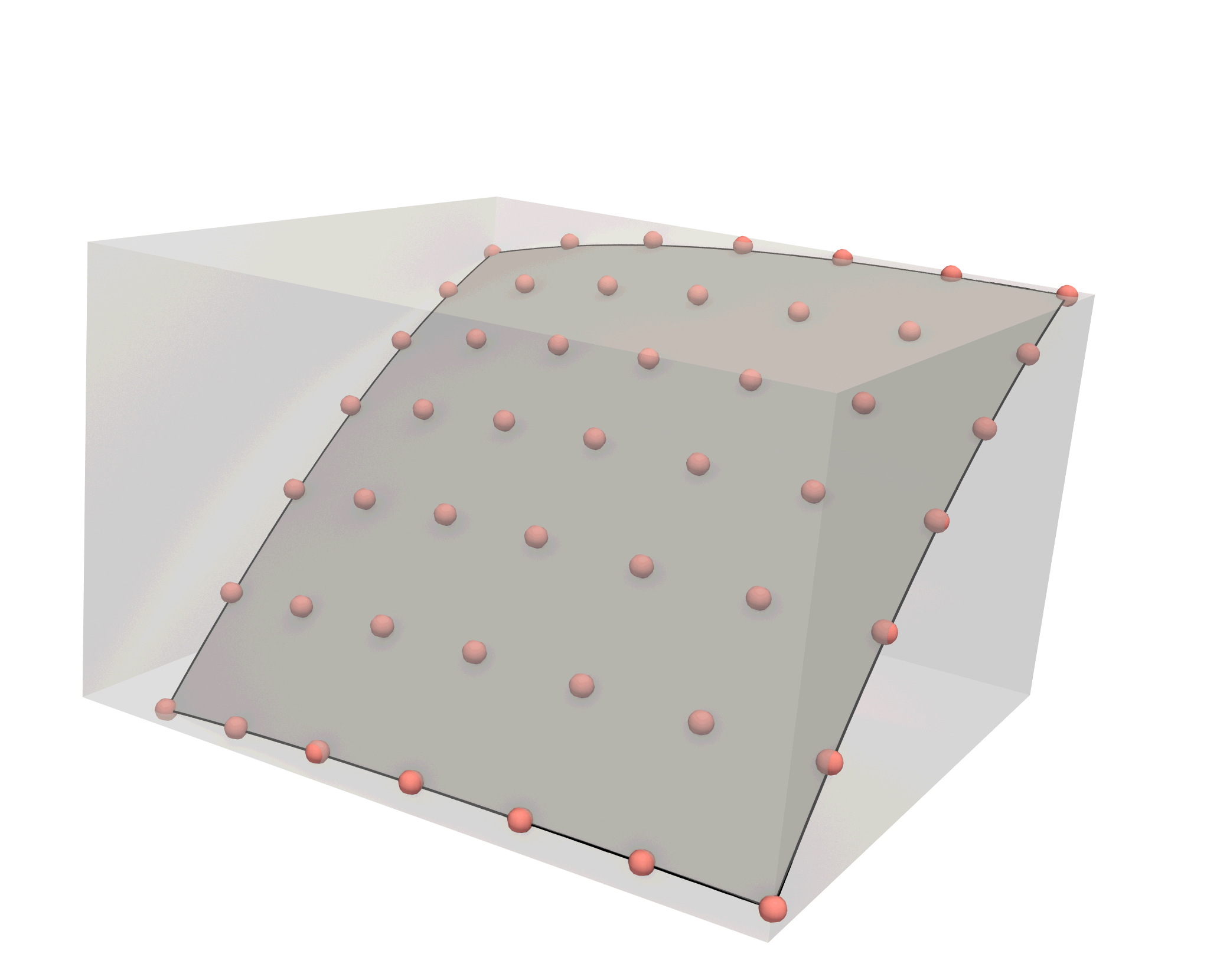}
  \end{minipage}\hfill
  \begin{minipage}{.33\textwidth}
    \includegraphics[width=\linewidth]{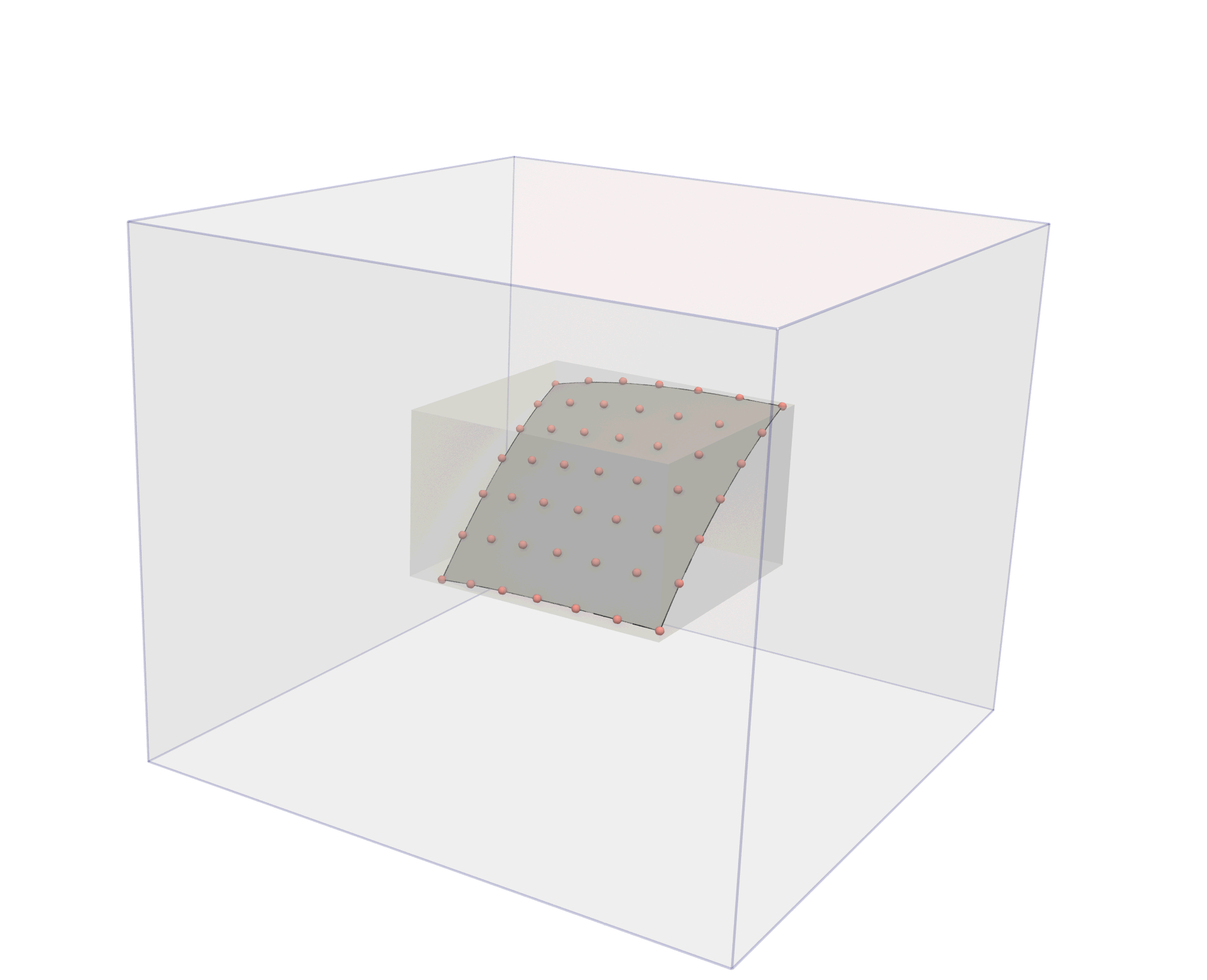}
  \end{minipage}\hfill
  \mcaption{fig:patch-coeffs-bbox}{Relationship between control points and
  bounding boxes}{Left: a patch in the tensor product B\'ezier basis, with control
      points ($\vector{a}_{\ell m}$'s from \cref{eq:tensor-product}) plotted. The convex hull of the control points of a patch are guaranteed
      to contain the patch. Center: The patch bounding box, computed from the control
      points. Right: The near-zone bounding box of the patch from
      \cref{sec:adaptive_upsampling_algo} computed by
      inflating the bounding box by $L(\vP)$.
}
\end{figure}

\subsubsection{Computing the closest point to a patch\label{sec:closest_point_algo}}

To find a candidate closest patch $\vP_{i_0}$ to $\vx$, we construct a fine triangle mesh and bounding boxes of each patch in $\Pcoarse$ and insert them into an \aabb tree.
We can query the \aabb tree for the nearest triangle to $\vx$ with the \aabb tree, which corresponds to $\vP_{i_0}$.
We then compute the accurate true distance $d_{i_0}$ to $\vP_{i_0}$ using a constrained Newton method, presented in detail in \cite[Section 2]{morse2020bsupplementary}.

However, there may be other patches whose distance to $\vx$ is less than $d_{i_0}$, as shown in \cref{fig:candidate-near-patch}.
To handle this case, we then query the \aabb tree for all patches $\vP_{i_1}, \hdots, \vP_{i_k}$ that are distance at most $d_{i_0}$ from $\vx$.
This is achieved by forming a query box centered at $\vx$ with edge length $2d_{i_0}$ and querying the \aabb tree for all intersection bounding boxes. 
The precise distance is then computed for each patch  $\vP_{i_1}, \hdots, \vP_{i_k}$ with \cite[Section 2]{morse2020bsupplementary} and the smallest distance is chosen.
We summarize this process in \cref{alg:closest_point}.

\begin{algorithm}[!htp]
    \KwData{A set of quadrature patches $\qP$, a query point $\vx$, Newton method tolerance $\err{opt}$}
  \KwResult{The closest point $\vsx$ on $\qP$ to $\vx$}

  \DontPrintSemicolon
  Construct an AABB tree $T_T$ from a fine triangle mesh of the quadrature patches of $\qP$\;
  Construct an AABB tree $T_B$ from bounding boxes of quadrature patches in $\qP$.\;
    $\tau_0 = $ closest triangle to $\vx$ computed with $T_T$ \;
  
    $\vP_{i_0} = $ patch corresponding to $\tau_0$\;
    Find the closest point $\vector{s}_{\vector{\vx},0}$ on $\vP_{i_0}$ to $\vx$ with \cite[Section 2]{morse2020bsupplementary}.\;
    $d_{i_0} = \|\vx - \vector{s}_{\vector{\vx},0}\|_2$\;
    $B_{d_{i_0}}(\vx)=$ a box centered a $\vx$ with edge length $2d_{i_0}$\;
    Find the boxes $B_{i_1}, \hdots B_{i_k}$ in $T_B$ that intersect $B_{d_{i_0}}(\vx)$\;
    
    \For{$B_{i_j} \in B_{i_1}, \hdots B_{i_k}$}{
      $\vP_{i_j} =$ quadrature patch corresponding to $B_{i_j}$ \;
      Find the closest point $\vector{s}_{\vector{\vx},j}$ on $\vP_{i_j}$ to $\vx$ with \cite[Section 2]{morse2020bsupplementary} to precision $\err{opt}$.\;
      $d_{i_j} = \|\vx - \vector{s}_{\vector{\vx},j}\|_2$\;
    }
    $j^* = \mathrm{argmin}_j\{d_{i_j}\}$ \;
    \Return{$\vector{s}_{\vector{x},j^*}$}
  \mcaption{alg:closest_point}{Compute the closest point to $\vx$}{}
\end{algorithm}

\begin{figure}[!htb]
  \centering
  \hfill
  \begin{minipage}{.2\textwidth}
      \includegraphics[width=\linewidth]{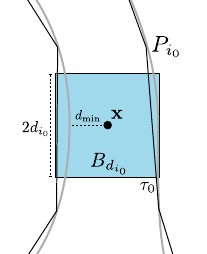}
  \end{minipage}\hfill
  \mcaption{fig:candidate-near-patch}{A \twod schematic of near-patch candidate selection}{
      A visual depiction of the quantities defined in lines 3-7 of \cref{alg:closest_point} (shown here in \twod for simplicity), with notation matching \cref{alg:point_marking}.
      The triangle-mesh proxy is drawn in as black lines and patches are drawn as gray curves.
      We have found an initial closest triangle $\tau_0$ to $\vx$ corresponding to patch $\vP_{i_0}$ and computed $d(\vx, \vP_{i_0}) = d_{i_0}$.
      We then query the \aabb tree for all patches that intersect box $B_{d_{i_0}}$ with edge length $2d_{i_0}$, shown in blue.
      There is clearly a patch that is closer to $\vx$ than $\vP_{i_0}$ that will be returned from the query, which will be distance $d_\lbl{min}$ from $\vx$.
}
\end{figure}

\subsection{Admissibility algorithm\label{sec:admissible_algo}}
Our algorithm to enforce \cref{criteria:1,criteria:2,criteria:3} proceeds as follows:
\begin{itemize}
    \item To enforce \Cref{criteria:1}, we adaptively fit a set of surface patches to the embeddings $\gamma_r$ representing $\Gamma$.
We construct a bidegree $(n,n)$ piecewise polynomial least-squares approximation $\vP_i$ in the form of \cref{eq:tensor-product} to $\gamma_r$ on $I^2$.
        If $\vP_i$'s domain $\mathcal{D}_i$ is obtained by refinement of $E_r$, we fit $\vP_i \circ \eta_i$ to $\gamma_r$ on $\I^2$, using $4n \times 4n$ samples on $\I^2$. 
        If the pointwise error of $\vP_i$ and its partial derivatives is greater than $\err{g}$, then it is quadrisected and the process is repeated. 

\item  Once the embeddings are resolved, we resolve $f$ on each surface patch produced from the previous step in a similar fashion to enforce \Cref{criteria:2}.
  However, rather than a least-squares approximation in this stage, we use piecewise polynomial interpolation.
 
\item To enforce \Cref{criteria:3}, we construct the set of check centers $\vhc_I$ which correspond to the check points required to evaluate the solution at the quadrature nodes $\vy_I$.
    For each check center $\vhc_I$, we find the closest point $\vector{s}_{\vhc_I} \in \Gammah$.
    If $ \|\vector{s}_{\vhc_I} - \vy_I\| \geq \err{opt}$, we split the quadrature patch $\vP$ containing $\vy_I$.
        The tolerance $\err{opt}$ is used in the Newton's method in \cite[Section 2]{morse2020bsupplementary}; we usually choose $\err{opt}=10^{-14}$.
Since $d(\vhc_I,\Gammah)$ is proportional to $L_{\vy_I}$, the new centers $\vhc_I$ for the refined patches  will be closer to the surface. 
    We use \cref{alg:closest_point} to compute $\vector{s}_{\vhc_I}$.
        However, in the case of check points, we can skip lines 1-6 to compute $d_{i_0}$,  since $\vhc_I$ is $R + r(p+1)/2$ away from $\vy_I \in \vP(D)$ by construction.
        We can apply lines 7-14 of \cref{alg:closest_point} with $d_{i_0} = R + r(p+1)/2$ to compute $\vector{s}_{\vhc_I}$.
\end{itemize}

We summarize the algorithm to enforce \Cref{criteria:3} in \cref{alg:admissibility}.
At each refinement iteration, the offending patches are decreased by quadrisection, which reduces the distance from the quadrature point $\vy_I$ to its checkpoints.
This eventually satisfies \Cref{criteria:3} and the algorithm terminates. 

\begin{algorithm}[!ht]
  \KwData{A set of quadrature patches $\qP$, optimization tolerance $\err{opt}$}
  \KwResult{An admissible set of quadrature patches $\qP$}

  \DontPrintSemicolon
  $\qP = \Pcoarse$\;
  Mark all patches in $\qP$ as inadmissible.\;
  
  \While{any patch in $\qP$ is inadmissible}{
    Construct an AABB tree $T$ as described in \cref{sec:closest_point_algo} from $\qP$\;
    \For{$\vP \in \qP$}{
        \If{$\vP$ is inadmissible}{
            Construct a set of check centers $C_\vP$ for each $\vy_J \in \vP(D)$ \;

            \For{$\vhc \in C_\vP$}{
                $d_{i_0} = R + r(p+1)/2$\;
                Compute $\vector{s}_\vhc$ with lines 7-14 of \cref{alg:closest_point} with precision $\err{opt}$ and $d_{i_0}$.\;
                \eIf{$\|\vector{s}_\vhc -\vy_J\|_2 < \err{opt}$}{
                    Mark $\vP$ as admissible.\;
                } {
                    Mark $\vP$ as inadmissible.\;
                    break   \tcp{only need one bad check center to mark $\vP$ for refinement}
                }
            }
        }
    }
    \For{$\vP \in \qP$}{
      \If{$\vP$ is inadmissible}{
        Split $\vP$ into its four child patches, mark each as inadmissible, and replace $\vP$ with its children in $\qP$.
      }
    }
  }
  \Return{$\qP$}
  \mcaption{alg:admissibility}{Enforce admissibility \Cref{criteria:3} on a set of quadrature patches}{}
\end{algorithm}

\subsection{Adaptive upsampling algorithm \label{sec:adaptive_upsampling_algo}}
Before detailing our upsampling algorithm to satisfy the criteria outlined in \cref{sec:adaptive_upsampling}, we must define the notion of a \textit{near-zone bounding box} of a quadrature patch $\vP$, denoted $B_\lbl{near}(\vP)$.
The near-zone bounding box of $\vP$ is computed as described in \cref{sec:aabb_trees}, but then is inflated by $2L(\vP)$, as shown in \cref{fig:patch-coeffs-bbox}-right.
This inflation guarantees that any point $\vx$ that is near $\vP$ is contained in $B_\lbl{near}(\vP)$ and, for an admissible set of quadrature patches $\Pcoarse$, that any $\vx\in \Omega_N$ must be contained in some quadrature patch's near-zone bounding box.
This means that by forming $B_\lbl{near}(\vP)$ for each quadrature patch in $\Pfine$, a check point is in $\Omega_I$ if it is not contained in any near-zone bounding boxes.

To compute the upsampled patch set from $\Pcoarse$, we initially set $\Pfine = \Pcoarse$, compute the near-zone bounding boxes of each patch in $\Pfine$ and insert them into an \aabb tree.
We also construct the set of check points $C$ required to evaluate our discretized layer-potential with \qbkix (\cref{sec:singular-eval}).
For each check point $\vc \in C$, we query the \aabb tree for all near-zone bounding boxes that contain $\vc$.
If there are no such boxes, we know $\vc$ is far from all quadrature patches and can continue.
If, however, there are near-zone bounding boxes $B_{i_0},\hdots, B_{i_k}$ containing $\vc$, we compute the distances $d_{i_k}$ from $\vc$ to $\vP_{i_1},\hdots,\vP_{i_k}$ using \cite[Section 2]{morse2020bsupplementary}.
If $d_{i_k} < L(\vP_{i_k})$, we replace $\vP_{i_k}$ in $\Pfine$ with its four children produced by quadrisection.

To improve the performance of this refinement procedure, we allow for the option to skip the Newton method in \cref{alg:closest_point} and immediately refine all patches $\vP_{i_0},\hdots \vP_{i_k}$.
This is advantageous in the early iterations of the algorithm, when most check points are near to patches by design.
We allow for a parameter $n_\lbl{skip}$ to indicate the number of iterations to skip the Newton optimization and trigger refinement immediately.
We typically set $n_\lbl{skip}=2$.
We summarize our algorithm in \cref{alg:adaptive_upsampling}.

\begin{algorithm}[!htp]
    \KwData{An admissible patch set $\qP$, number of iterations $n_\lbl{skip}$ before using \cite[Section 2]{morse2020bsupplementary}}
  \KwResult{An upsampled set of quadrature patches}
  
  \DontPrintSemicolon
  Compute inflated near-zone bounding boxes $B_1, \hdots,  B_N$ of each $\vP \in \qP$.\;
  Construct an AABB tree $T$ from the near-zone bounding boxes.\;
  Construct all check points $C$ required to evaluate the \cref{eq:int-eq} on $\qP$.\;

  $\qP_\lbl{fine} = \qP$\;
  Mark all check points in $C$ as near.\;
  $i=0$

  \While{any $\vc \in C$ is marked near}{
    \For{$\vc \in C$}{
        \If{$\vc$ is marked near}{
            Query $T$ for all bounding boxes $B_{i_1}, \hdots B_{i_k}$ containing $\vc$.\;
            $\vP_{i_1}, \hdots \vP_{i_k} = $ patches corresponding to boxes $B_{i_1}, \hdots B_{i_k}$\;
            Mark $\vc$ as far\;
            \For{$\vP \in \vP_{i_1}, \hdots \vP_{i_k}$}{
                \eIf{$i > n_\lbl{skip}$}{
                    Find the closest point $\vector{s}_{\vc}$ on $\vP$ to $\vc$ with \cref{alg:closest_point}.\;
                    \If{ $\|\vector{s}_{\vc} - \vc\|_2 < L(\vP)$}{
                        Split $\vP$ and replace it in $\qP_\lbl{fine}$ with its children. \;
                        Mark $\vc$ as near\;
                    }
                } {
                    Split $\vP$ and replace it in $\qP_\lbl{fine}$ with its children.\;
                    Mark $\vc$ as near\;
                }
 
            }
       }
    }
    $i=i+1$\;

  }
    \mcaption{alg:adaptive_upsampling}{Adaptively upsample to accurately evaluate \cref{eq:double_layer_disc} at check points}{}
\end{algorithm}


\subsection{Marking target points for evaluation\label{app:point_marking}}

Once we have solved \cref{eq:linear_system} for $\phi$ on $\Gammah$, we need the ability to evaluate \cref{eq:double_layer} at an arbitrary set of points in the domain.
For a target point $\vx$, in order apply the algorithm in \cref{sec:singular-eval}, we need to determine whether or not $\vx \in \Omega$ and, if so, whether $\vx $ is in $\Omega_N, \Omega_I$ or $\Omega_F$.
Both of these questions can be answered by computing the closest point $\vsx$ on $\Gammah$ to $\vx$. 
If $\vn(\vsx)\cdot(\vx - \vsx) < 0$, then $\vx \in \Omega$. 
As we have seen in \cref{sec:adaptive_upsampling}, the distance $\|\vx - \vsx\|$ determines whether $\vx \in \Omega_N,\Omega_I$ or $\Omega_F$.
However, for large numbers of target points, a brute force calculation of closest points on $\Gammah$ to all target points is prohibitively expensive.
We present an accelerated algorithm combining \cref{alg:closest_point} and an \fmm evaluation to require only constant work per target point.

\subsubsection{Marking and culling far points \label{sec:mark_far}}
A severe shortcoming of \cref{alg:closest_point} is that its performance deteriorates as the distance from $\vx$ to $\Gammah$ increases. 
Consider the case where $\Gammah$ is a sphere with radius $r$ with $\vx$ at its center.
The first stage of \cref{alg:closest_point} returns a single quadrature patch that is distance $r$ from $\vx$; the next stage will return all quadrature patches.
This will take $O(N)$ time to check the distance to each patch.
Even on more typical geometries, we observe poor performance of \cref{alg:closest_point} when $\vx$ is far from $\Gammah$.

To address this, we use an additional \fmm-based acceleration step to mark most points far from $\Gammah$ before using applying \cref{alg:closest_point}. 
Our approach is based on computing the generalized winding number \cite{jacobson2013robust} of $\Gammah$ at the evaluation points. 
For closed curves in $\mathbb{R}^2$, the \textit{winding number} at a point counts the number of times the curve travels around that point. 
The \textit{generalized winding number} of a surface $\Gammah$ at a point $\vx \in \mathbb{R}^3$ can be written as 
%

\begin{equation}
  \omega_{\Gammah}(\vx) = -\frac{1}{4\pi}\int_{\Gammah} \frac{(\vx - \vy) \cdot \vn}{\|\vx - \vy\|^3} d\vy_{\Gammah}
  \label{eq:gen_winding2}
\end{equation}
We recognize this integral as the double-layer potential in \cref{eq:double_layer} for a Laplace problem with $\phi = 1$. 
Its values in $\mathbb{R}^3$ are  \cite{K}:
\begin{equation}
  \omega_{\Gammah}(x) =
  \begin{cases}
    1 & \vx \in \Omega \setminus \Gammah\\
    1/2 & \vx \in \Gammah\\
    0 &  \vx \in \mathbb{R}^3 \setminus \overline{\Omega}
  \end{cases}
  \label{eq:const-density}
\end{equation}
\cref{eq:gen_winding2} can be evaluated using the same surface quadrature in \cref{eq:double_layer_disc} using an \fmm in $O(N)$ time.
While the quadrature rule is inaccurate close to the surface, $\Omega_F$ is defined precisely as the zone where the quadrature rule is sufficiently accurate. 
For this reason, we use
\begin{equation}
  |\omega_{\Gammah}(\vx) -1| < \etrg
  \label{eq:marking-far}
\end{equation}
to mark points $\vx \in \Omega_F \subset \Omega$ and a similar relation
\begin{equation}
  |\omega_{\Gammah}(\vx)| < \etrg
  \label{eq:marking-out}
\end{equation}
to mark points $\vx \not \in \Omega$.
This approach is similar in spirit to the spectrally accurate collision detection scheme of \cite[Section 3.5] {QB}.
Unlike \cite{QB}, however, we do \textit{not} use singular integration to mark all points. 
This isn't possible since at this stage since we do not yet know which target points require singular integration. 
We use the \fmm evaluation purely as a culling mechanism before applying the full marking algorithm.

\noindent\textbf{Remark:} Since the quadrature rule may be highly inaccurate for points close to the surface, due the near-singular nature of the integrand, $\omega_{\Gammah}(\vx)$ may happen to be close to one or zero. 
We highlight that it is possible that points outside $\Omega_F$ may be mismarked, although we have not observed this in practice. 


\subsubsection{Full marking algorithm}
We combine the algorithms of the previous two sections into a single marking pipeline for a general set of target points in $\mathbb{R}^3$, by first applying the algorithm of \cref{sec:mark_far} to mark all points satisfying  \cref{eq:marking-far} then passing the remaining points to \cref{alg:closest_point}.
The full marking algorithm is summarized as \cref{alg:point_marking}.

\begin{algorithm}
  \KwData{An admissible set of quadrature patches $\qP, \etrg$, target points $\vX$}
  \KwResult{A marked set of target points $\vX$}
  
  \DontPrintSemicolon
  $\phi_0 = 1$\;
  $\omega_{\Gammah} =$ \texttt{Laplace\_FMM}($\qP$, $\vX$, $\phi_0$)\;

  \For{$\vx \in \vX$}{
    \uIf{$|\omega_{\Gammah}(\vx) - 1| < \etrg$}{
      Mark $\vx$ as inside $\Omega$.\;
      Mark $\vx$ as in $\Omega_\lbl{F}$.\;
    }
    \uElseIf{$|\omega_{\Gammah}(\vx)| < \etrg$}{
      Mark $\vx$ as outside $\Omega$.\;
    } 
  }
  \For{$\vx \in \vX$}{
    \If{$\vx$ is unmarked}{
      
        Compute the closest point $\vsx$ to $\vx$ with \cref{alg:closest_point}\;
        $d_\lbl{min} = \|\vsx -\vx\|_2$\;
        \eIf{$d_\lbl{min} \leq L_{\vsx}$}{
            Mark $\vx$ as in $\Omega_N$\;
        }{
            Mark $\vx$ as in $\Omega_I$\;
        }
        \If{$\vn(\vsx)\cdot(\vx-\vsx) < 0$}{
            Mark $\vx$ as inside $\Omega$\;
        }{
            Mark $\vx$ as outside $\Omega$\;
        }
    }
  }
  \mcaption{alg:point_marking} {Mark points in regions $\Omega_F$, $\Omega_I$ and $\Omega_N$}{}

\end{algorithm}

\section{Error Analysis\label{sec:error}}
As with other approximation-based quadrature methods, \qbkix has two primary sources of error: the quadrature error $e_Q$ incurred as a result of evaluating potential at the check points and the extrapolation error $e_E$ due to evaluating the polynomial approximation of the potential at the target point, assuming $\Pcoarse$ is admissible. Let
\begin{align}
e_Q(\vx) &=\left|\sum_{s=0}^p (u(c_s) - \hat{u}(c_s,\Pfine) )\ell_s(t_\vx)\right|,
  \label{eq:err-quad}\\
e_E(\vx) &= \left|u(\vx) - \sum_{s=0}^p u(c_s)\ell_s(t_\vx)\right|,
  \label{eq:err-ext} \\
e_\lbl{hedgehog}(\vx) &\leq e_Q(\vx) + e_E(\vx),
  \label{eq:err-hedgehog} \\
\end{align}
where $u(\vx)$ and $\hat{u}(\vx, \Pfine)$ are defined in \cref{eq:double_layer,eq:double_layer_disc} and $\ell_s(t)$ is the $s$-th Lagrange polynomial defined on the points $\{0,1,\hdots, p\}$.
We define $t_\vx$ such that $\vx = -\vn(\vy)(R + t_\vx r)$, so $t_\vx = \frac{\|\vx - \vy\| - R}{r}$. 
In this section, we first prove that we achieve high-order accuracy with our singular/near-singular evaluation scheme in \cref{sec:singular-eval} with respect to extrapolation order $p$ and quadrature order $q$.
We then detail the impact of surface approximation on overall solution accuracy.


\subsection{Quadrature error\label{sec:quad_error}}
We briefly state a tensor-product variation of known Clenshaw-Curtis quadrature error results as applied to smooth functions in \threed.  
This estimate is derived based on assumptions detailed in \cref{app:proof_of_error_quad_high_order} that, in general, is difficult to verify in practice and may not hold for all functions we consider.
For this reason, we refer to it as a heuristic.

\begin{heuristic}
    Let the boundary $\Gammah$ be discretized by quadrature patches over the domains $[-h,h]$ and the boundary condition $f$ in \cref{eq:pde} be at least $C^k$.
  Apply the $q$-th order Clenshaw-Curtis quadrature rule to the double-layer potential $u(\vx)$ given in \cref{eq:double_layer_patches} and let $\vx$ be in the interior of $\Omega$. 
  Then for all sufficiently large $q$:
\begin{align}
  e_\lbl{Q}(\vx) &
  \lesssim  \frac{128h^{k+1}}{15\pi k(2q + 1 - k)^{k}} \tilde{V}, \\ 
  \shortintertext{where}
  \tilde{V} &= \max_{i=1,\hdots,N} \max_{\alpha,\beta\leq k}\left\| \frac{\partial^{\alpha+\beta}}{\partial u^\alpha\partial v^\beta}\left(\frac{\partial G(\vx,\vP_i(s,t))}{\partial \vn}\phi(\vP_i(s,t)) g_{\vP_i}(s,t)\right)\right\|_T,
  \label{eq:error_quad_gen_target}
\end{align}
 $g_\vP$ is the determinant of the metric tensor of a patch $\vP$ implicit in \cref{eq:double_layer_patches}, $\lesssim$ means "approximately less than or equal to," and $\|\zeta\|_T = \|\zeta^\prime/\sqrt{1-x^2}\|_1$.
  \label{heuristic:error_quad_high_order}
\end{heuristic}

This heuristic captures the qualitative behavior of the error. 
We present the derivation of \Cref{heuristic:error_quad_high_order} in \cref{app:proof_of_error_quad_high_order}.
This heuristic is  insufficient for direct application to \cref{eq:double_layer_patches}. 
As $\vx \to \Gammah$, the value of $k$ required in \Cref{heuristic:error_quad_high_order} grows rapidly due to growing higher order derivatives of the integrand. 
Such large values of $q$ and $k$ imply that smooth quadrature rules are cost-prohibitive; this is the problem that singular/near-singular quadrature schemes like \qbkix aim to address.
Moreover, this estimate is too loose to determine whether \qbkix or smooth quadrature is required to evaluate the potential.
The assumption in \cref{sec:adaptive_upsampling} addresses this problem by providing a cheap, reasonably robust criterion for refinement that is motivated by existing analyses \cite{aT2,barnett2014evaluation} instead of relying on \Cref{heuristic:error_quad_high_order}.

\subsection{Extrapolation error\label{sec:extrap_error}}
A reasonable critique of \qbkix is its reliance on an equispaced polynomial interpolant to extrapolate values of $u$ to the target point.
Despite using the first-kind barycentric interpolation formula \cite{webb2012stability}, polynomial interpolation and extrapolation in equispaced points is well-known for an exponentially growing Lebesgue constant and poor stability properties as the number of points $p$ increases \cite{trefethen1991two,platte2011impossibility}.
Recently \cite{DT} demonstrated stable extrapolation in equispaced $p+1$ points
using least-sqaures polynomials of degree $\sqrt{p}$.
However, these results are asymptotic in nature and don't tell the full story for small to moderate values of $p$, as in the \qbkix context.

\begin{figure}[!htb]
      \centering
      \includegraphics[width=.45\linewidth]{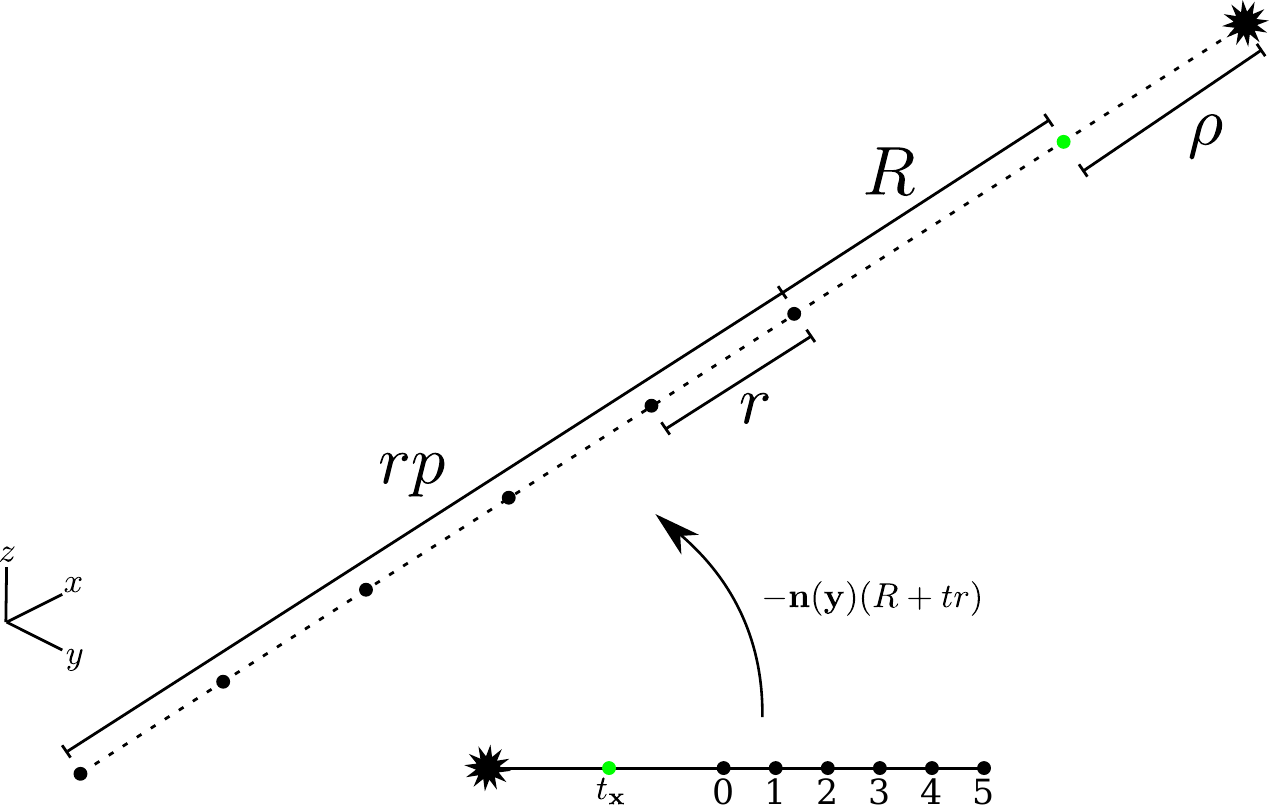}
  \mcaption{fig:extrap-err-setup}{Diagram of extrapolation setup}{ The toy setup used to study the extrapolation error of a singular function. We choose a simple point singularity $\mu(t) = \frac{1}{\|t - q\|}$ where $q = (\rho, 0, 0)$ (black star) with $\rho =-.1$.
  We choose samples at the points $t_i = (R+ir, 0,0)$ for $i=0,\hdots, p$ (black dots) and extrapolate the values $\mu(t_0),\hdots, \mu(t_p)$ to $t=0$ (green dot).
    }
\end{figure}

We begin our discussion with a simple representative experiment in equispaced extrapolation.
\Cref{fig:extrap-err-setup} depicts a minimal extrapolation setup in \threed of a simple singular function $\mu(t) = 1/\|t-q\|$ along a line, with $q = (\rho, 0, 0)$ and $\rho = -.1$.
We extrapolate exact values of $\mu$ from $p$ points, located at $t_i = (R + ir,0,0)$, to the origin.
This closely mimics the worse-case extrapolation error in \oned of a function analytic in a Bernstein ellipse with a real axis intercept of $\rho+R+ rp/2$.
We repeat this for a large range of values of $r$ and $R$ for various values of $p$. 
The log of the relative error is plotted in \Cref{fig:extrap-err-p6,fig:extrap-err-p8,fig:extrap-err-p10,fig:extrap-err-p12,fig:extrap-err-p14} as a function of the relative extrapolation interval size $rp/R$ and the scaled extrapolation distance $R/\rho$.

As mentioned in \cite[Section 3.4]{RBZ}, the adaptive refinement of $\Pcoarse$ resolves the boundary data $f$, and therefore $u$ and $\phi$, on the length scale $L$ of the patch.
This means we can reasonably assume that the distance of the nearest singularity is $O(L)$ from $\Gammah$, i.e., $\rho = \lambda L$ for some $\lambda$.
In the context of \qbkix, we know that $R=bL(P)$ and $r=aL(P)$.
\Cref{fig:extrap-err-p6,fig:extrap-err-p8,fig:extrap-err-p10,fig:extrap-err-p12,fig:extrap-err-p14}
are a study of extrapolation error as a function of $a/b$, $b/\lambda$ and $p$.

\begin{figure}[!htb]
  \setlength\figureheight{1.9in}
  \setlength\figurewidth{2.1in}
  \centering
  \hfill

  \begin{subfigure}{.33\textwidth}
      \centering
      \includegraphics[width=\linewidth]{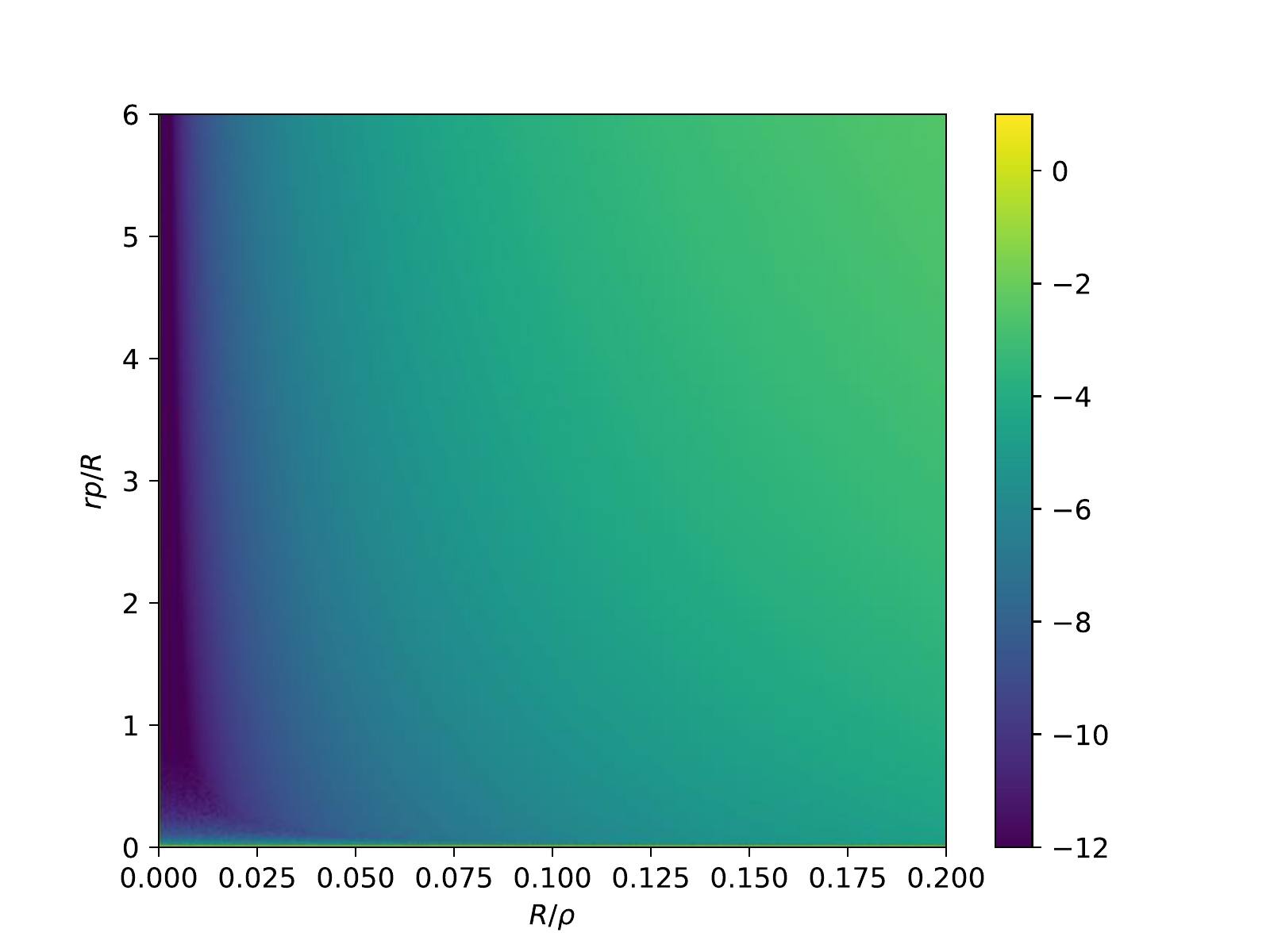}
    \caption{\label{fig:extrap-err-p6}}
  \end{subfigure}
  \begin{subfigure}{.33\textwidth}
      \centering
      \includegraphics[width=\linewidth]{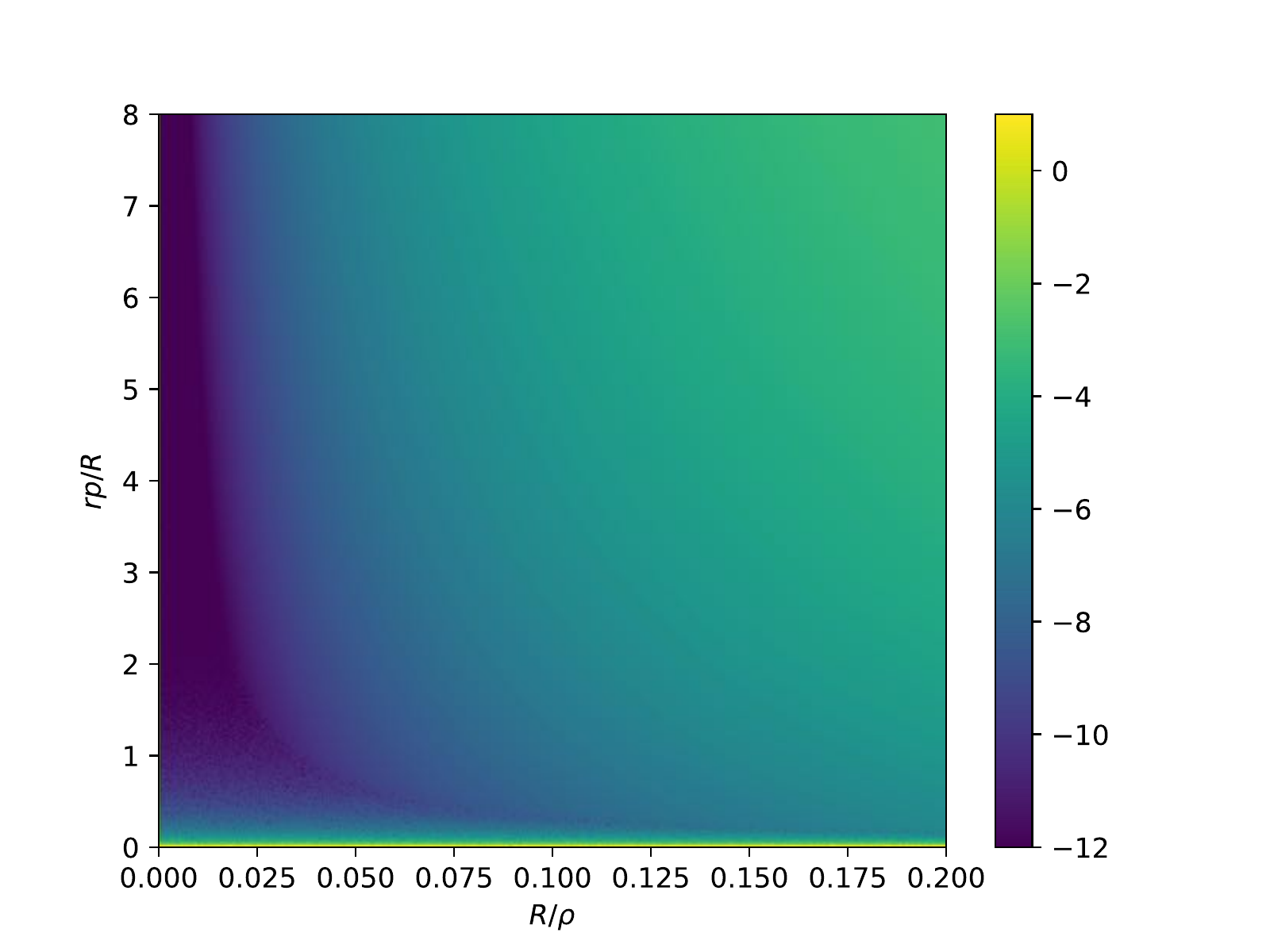}
    \caption{\label{fig:extrap-err-p8}}
  \end{subfigure}%
    \begin{subfigure}{.33\textwidth}
      \centering
      \includegraphics[width=\linewidth]{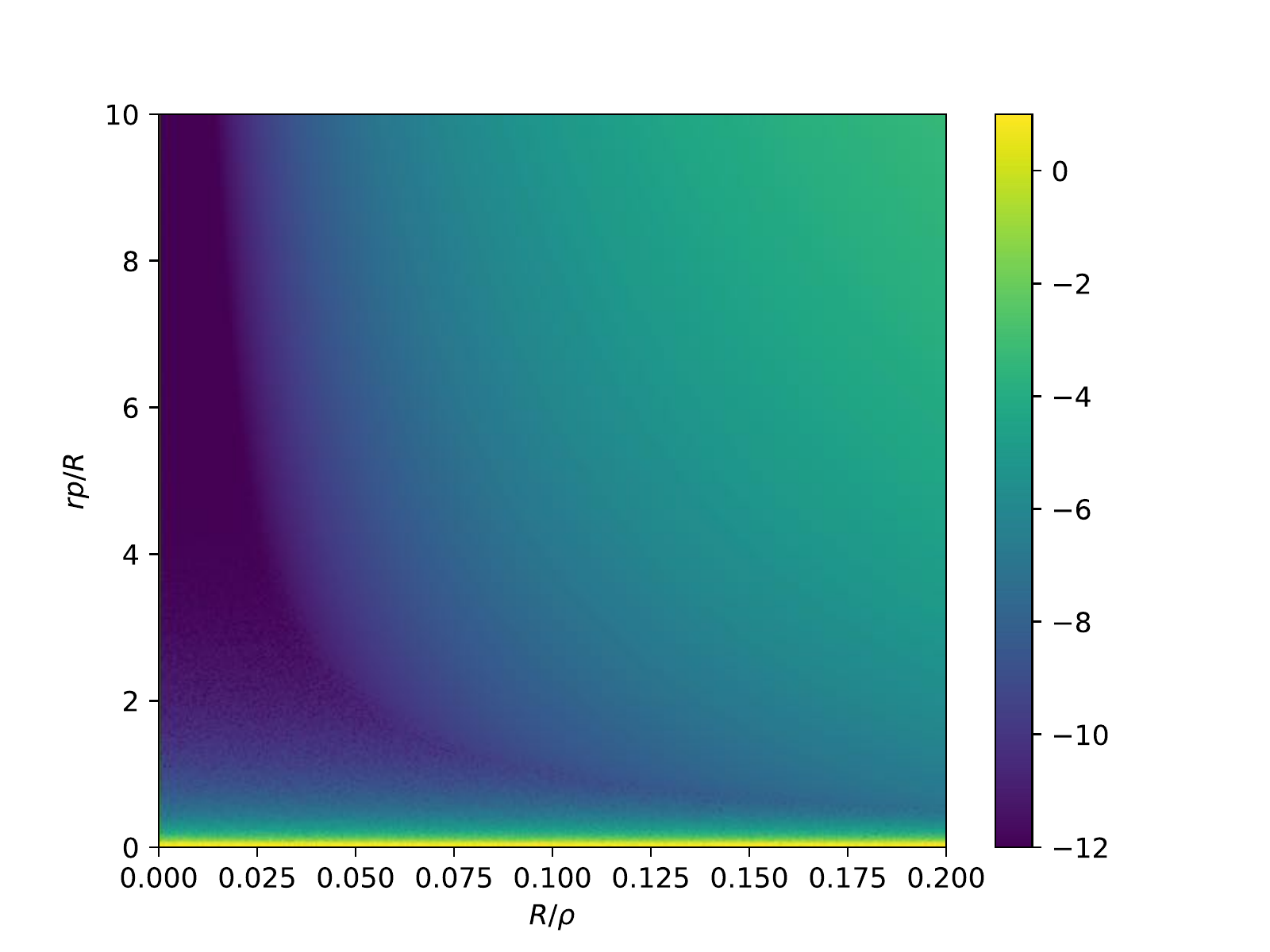}
    \caption{\label{fig:extrap-err-p10}}
  \end{subfigure}
  \begin{subfigure}{.33\textwidth}
      \centering
      \includegraphics[width=\linewidth]{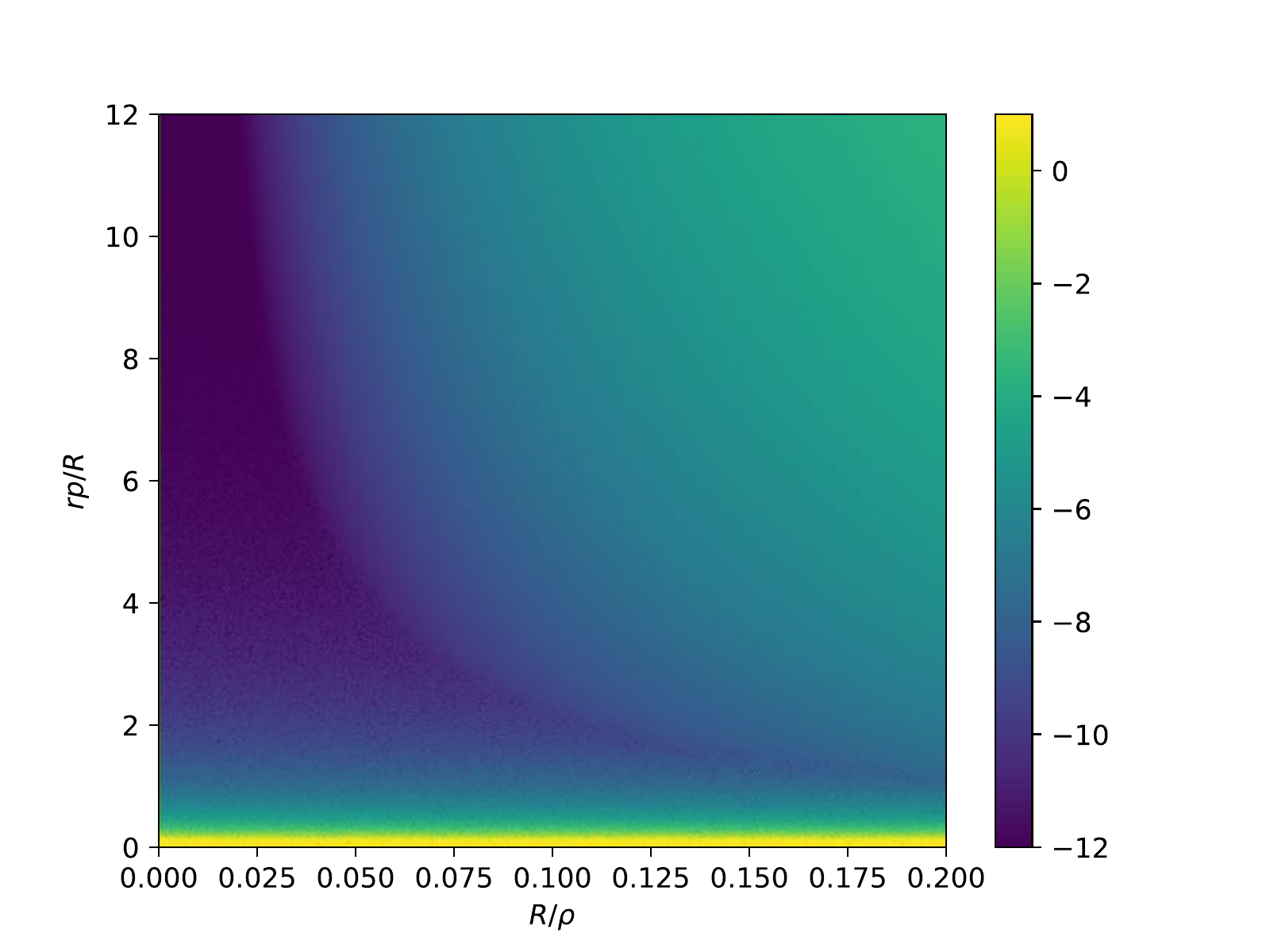}
    \caption{\label{fig:extrap-err-p12}}
  \end{subfigure}%
  \begin{subfigure}{.33\textwidth}
      \centering
      \includegraphics[width=\linewidth]{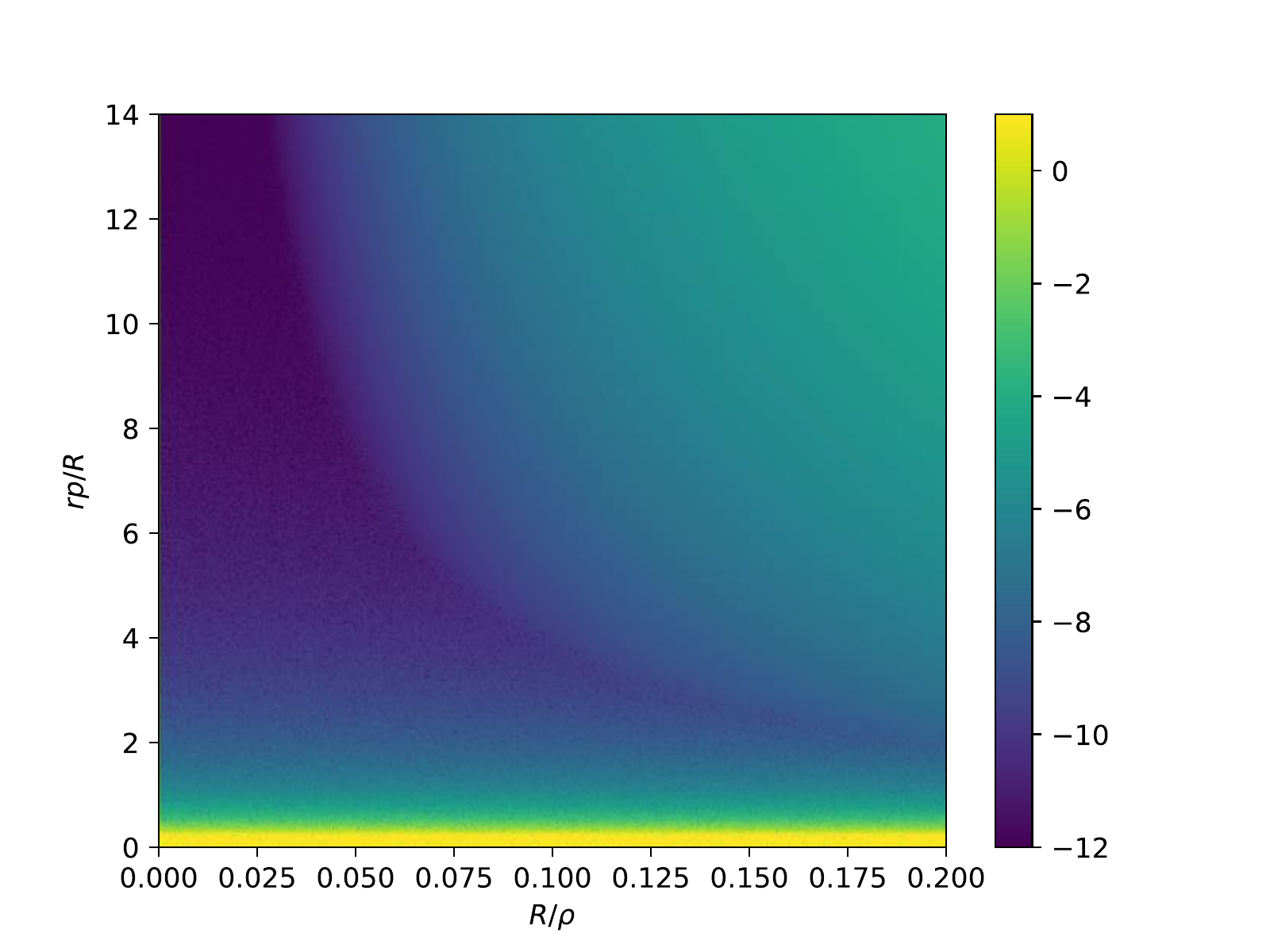}
    \caption{\label{fig:extrap-err-p14}}
  \end{subfigure}%
  \mcaption{fig:extrap-experiment}{Empirical extrapolation error behavior}{We sweep over a range of $R$ and $r$ values to vary \Cref{fig:extrap-err-setup} and plot the log of the relative error in
    \Cref{fig:extrap-err-p6,fig:extrap-err-p8,fig:extrap-err-p10,fig:extrap-err-p12,fig:extrap-err-p14}, for values $p=6,8,10,12,14$, in increasing order, from (a) to (e).
In these figures, the $x$-axis is the extrapolation distance $R$ normalized by $\rho$ and the $y$-axis is the ratio $rp/R$.
The top of the $y$-axis corresponds to $r=R$; $rp/R = 1$ corresponds to our choice of the parameter $a$.
Assuming that $\rho = O(L)$, $r/R = a/b$ and $R/\rho = b/\lambda$ for some constant $\lambda$.
}
\end{figure}
There are several important observations to make from these plots: 
\begin{itemize}
  \item Extrapolation error decreases as $R/\rho$ decreases, as expected.
  \item For a fixed value of $R/\rho$, the extrapolation error \textit{decreases} rapidly as $rp$ decreases, up to a certain value $r^*p$.
This is somewhat counterintuitive, since this means placing points closer together and extrapolating a further distance relative to $rp$.
For a fixed $p$ in exact arithmetic, letting the interpolation interval size tend to zero produces an order $p$ Taylor expansion of the solution $u$ centered at the interval's origin, which accounts for this phenomenon.


  \item Beyond $r^*p$, the extrapolation error \textit{increases}.
    The effects of finite precision eventually pollutes the convergence behavior described above. 
    Moreover, the spacing $r^*$ appears to be a function of $p$.
    For $p=6$, $r$ can be reduced to $1/p$ without any numerical issues, but
    by $p=14$, only $r>\frac{1}{2}$ is a safe choice for extrapolation.
\end{itemize}
We do not aim to rigorously analyze these phenomena in this work. 
We highlight them to provide empirical evidence that equispaced extrapolation is a reasonable, but not optimal, choice for our problem of singular/near-singular integration and to provide some intuition for our parameter choices.

The following simple result describes the behavior of the extrapolation error in \cref{eq:err-ext}.
\begin{theorem}
    Let $u(\vc(t))$ be the solution to \cref{eq:pde} given by \cref{eq:double_layer}, restricted to the line $\vc(t)$ in \threed intersecting $\vx$, let $\vc(t)$ be given by 
  \begin{equation}
    \vc(t) = \vsx - (R + tr)\vn(\vsx),
    \label{eq:check_point_param}
  \end{equation}
  where $\vsx$ is the closest point on $\Gammah$ to $\vx$, $R=bL_{\vsx}$, $r=aL_{\vsx}$, $\vn(\vsx)$ is the outward surface normal at $\vsx$, and let $|u^{(p)}(\vc(t))|$ be bounded above by $C_p$ on the interval $[-R, R+pr]$.
  Let $\mathfrak{P}(t)$ be the $p$-th order polynomial interpolant of $u(\vc(t))$ constructed from the check points $\vc_0, \hdots, \vc_p$, where $\vc_i = \vc(i)$. Then the extrapolation error associated with \qbkix behaves according to:
\begin{equation}
  |u(\vc(t_\vx)) - \mathfrak{P}(t_\vx)| \leq \frac{C_p}{(p+1)!}|R + rp|^p = \frac{C_p}{(p+1)!}|b+ap|^p\cdot | L|^p,
  \label{eq:extrap_err_init}
\end{equation}
where $t_\vx = \frac{\|\vx - \vsx\| - R}{r}$.
\label{thm:extrap_error}
\end{theorem}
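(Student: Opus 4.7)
The plan is to regard $u \circ \vc$ as a scalar smooth function of the single parameter $t$ on an interval containing both the interpolation nodes $\{0,1,\dots,p\}$ and the extrapolation point $t_\vx$, and then to invoke the classical Cauchy form of the Lagrange interpolation remainder. First I would observe that by the admissibility of $\Pcoarse$ the line segment traced by $\vc(t)$ for $t\in[-R/r,\,p]$ stays separated from $\Gammah$, so $u\circ\vc$ is smooth there and the hypothesized bound $|u^{(p)}(\vc(t))|\leq C_p$ can be read as a bound on the top-order $t$-derivative (or equivalently, up to scaling, on the corresponding directional derivative of $u$ along $\vn(\vsx)$). The Cauchy remainder then gives, for $\mathfrak{P}$ the unique polynomial of degree $p$ interpolating $u(\vc(\cdot))$ at the integer nodes $0,1,\dots,p$,
\[
    u(\vc(t_\vx)) - \mathfrak{P}(t_\vx) \;=\; \frac{1}{(p+1)!}\frac{d^{p+1}}{dt^{p+1}}(u\circ\vc)(\xi)\,\prod_{i=0}^{p}(t_\vx - i),
\]
for some $\xi$ in the smallest interval enclosing $\{t_\vx,0,1,\dots,p\}$.

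Second, I would use the chain rule to rewrite the high-order $t$-derivative in physical units: since $\vc'(t)=-r\vn(\vsx)$ is constant in $t$, the $(p+1)$-th $t$-derivative of $u\circ\vc$ equals $(-r)^{p+1}\partial_{\vn}^{p+1}u(\vc(\xi))$, which is bounded by $r^{p+1}C_p$ under the stated hypothesis. Third, I would bound the nodal polynomial. Since $\vx$ lies between $\vsx$ and $\vc_0$, we have $\|\vx-\vsx\|\in[0,R]$ and hence $t_\vx\in[-R/r,\,0]$; therefore $|t_\vx-i|=i-t_\vx\leq i+R/r$ for each $i$. Multiplying the product by the $r^{p+1}$ coming from the chain rule converts it to physical length scale,
\[
    r^{p+1}\prod_{i=0}^{p}|t_\vx - i| \;\leq\; \prod_{i=0}^{p}(ir + R) \;\leq\; R\,(R+rp)^{p},
\]
where the last inequality uses that the $i=0$ factor is at most $R$ while each of the remaining $p$ factors is at most $R+rp$. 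Substituting $R=bL_\vsx$ and $r=aL_\vsx$ then yields $|R+rp|^{p}=|b+ap|^{p}|L|^{p}$, producing the stated form.

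The argument is essentially a textbook application of Cauchy's remainder with a one-line chain-rule change of variable, so I do not anticipate a serious obstacle. The only subtlety worth writing carefully is the bookkeeping between the node-index coordinate $t$ and the physical arclength $s=R+tr$, together with the observation that one factor of the nodal polynomial contributes only $R$ (rather than $R+rp$), which is what produces the $p$-th rather than $(p+1)$-th power in the final bound. The real content of the estimate is therefore hidden in the hypothesis that $C_p$ is finite on the entire interval $[-R,\,R+pr]$; this in turn relies on the admissibility criteria of \Cref{sec:admissible} keeping the check-point segment uniformly away from $\Gammah$ at a distance proportional to $L_\vsx$, so that standard interior regularity for elliptic PDEs controls the normal derivatives of $u$.
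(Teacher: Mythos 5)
Your approach is the same as the paper's: both invoke the Cauchy form of the Lagrange remainder, bound the $(p+1)$st derivative by a constant, and bound the nodal product geometrically. Where you differ is in care, not in strategy. You make explicit two things the paper's one-paragraph proof elides: (i) the chain-rule conversion from the node-index parameter $t$ to the physical scale (producing the $r^{p+1}$ factor that turns $\prod_i |t_\vx - i|$ into $\prod_i(ir+R)$), and (ii) the fact that the nodal product has $p+1$ factors, with the $i=0$ factor contributing at most $R$ rather than $R+rp$. This bookkeeping is genuinely more careful than what appears in the paper, which simply asserts $|x-x_i|<R+rp$ for each of the $p+1$ factors and then writes $(R+rp)^p$.

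However, your last sentence claims that substituting $R=bL$, $r=aL$ ``produc[es] the stated form,'' and that is where a gap opens: what you actually derived is
$\frac{C}{(p+1)!}\,R\,(R+rp)^p$,
which carries $p+1$ powers of $L$ (namely $b\,|b+ap|^p\,L^{p+1}$), not the $p$ powers in \cref{eq:extrap_err_init}. The extra factor of $R$ does not disappear, and your intermediate remark that the $i=0$ factor ``produces the $p$-th rather than $(p+1)$-th power'' does not make the accounting close — you still have $p+1$ length scales in total. The same off-by-one is present in the theorem's hypothesis: the Cauchy remainder at $p+1$ nodes involves $u^{(p+1)}(\xi)$, whereas the statement bounds $u^{(p)}$. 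Both issues are already present in the paper's proof (which writes $f^{(p)}(\xi)/(p+1)!$ against a product of $p+1$ factors and still reports $(R+rp)^p$), so your argument is no less rigorous than the original; it just exposes that the theorem as stated is off by one order in both the derivative and the length-scale power. If you want the bound to be tight as written, you should either reduce the nodal product honestly to $(R+rp)^{p+1}$ and adjust the claimed rate, or note explicitly that $R\le R+rp$ so the final expression is $\le (R+rp)^{p+1}=|b+ap|^{p+1}L^{p+1}$ and flag the mismatch with the statement.
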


\begin{proof}

    We know that for a smooth function $f$ and points $x_0, \hdots x_p$ in a \oned interval $I_0$, for some $\xi \in I_0$, the following relation holds for all $x \in I_0$:
\begin{equation}
    f(x) - \mathfrak{P}(x) = \frac{f^{(p)}(\xi)}{(p+1)!}\prod_{i=0}^p(x-x_i).
  \label{eq:exterp_err_init}
\end{equation}
Let $\mathfrak{P}$ be the $p$th order polynomial interpolating the points $x_0,\hdots x_p$.
In the \qbkix setup, since $R+rp$ is the distance of the furthest check point to $\vy$, we know that $x - x_i < R +rp$ for each $i$.
Since $f(t) = u(\vc(t))$ is harmonic, and therefore $C^\infty$, in $\Omega$, $|f^{(p)}(\xi)|$ can be uniformly bounded on $I_0$ by some constant $C_p$,
Noting that $R = bL$ and $r=aL$ yields our result.
\end{proof}

For fixed values of $a$ and $b$, as we let $L\to 0$, the extrapolation error is bounded by $O(L^p)$.
In practice, however, this means that we can choose $a$ and $b$ to minimize the constant factor $|b+ap|^p$ in \cref{thm:extrap_error}.
Since $p>1$, $a$ must be chosen to balance out the contribution of $p$, yet our extrapolation study shows that we can't simply set $a=0$.  
We therefore choose $a \leq 1/p$ for $p=6$ and 8, motivated by \cref{fig:extrap-err-p6,fig:extrap-err-p8}.
Moreover, since $b < 1$, we can choose $a \leq b/p$, which allows $a$ and $b$ to decay at the same rate.
The advantage of choosing $a \leq b/p$ is that $b$ is a single parameter that controls the accuracy of \qbkix.
Since we have fixed the quadrature order $q=20$ to satisfy the assumption in \cref{sec:adaptive_upsampling}, a smaller value of $b$ will trigger more upsampling in \cref{alg:adaptive_upsampling}, keeping quadrature error fixed while reducing extrapolation error.

It is important to keep in mind that \cref{thm:extrap_error} only provides insight for moderate values of $p$; our conclusions are largely irrelevant for large $p$.
We use  $p = 6$ and $a = b/6$, leaving the construction of an optimal extrapolation extrapolation scheme to future work. 
\subsection{Limitations \label{sec:limitations}}
Our error discussion reveals several limitations of our method. 
The first and most apparent shortcoming is that extrapolation instability fundamentally limits convergence order. 
However, for reasonable orders of convergence, up to 14, we have discussed an empirical scheme to choose parameters to maximize the available convergence behavior.
Moreover, low-order surface geometries used in engineering applications will likely limit the convergence rate before it is limited by the extrapolation order, making this a non-issue in practical scenarios.

Another downside of the chosen extrapolation approach is lack of direct extension of \qbkix to oscillatory problems like the Helmholtz equation.
Due to the limitation on the values of $p$, we can't guarantee the ability to resolve high-frequency oscillations in the solution.
A new extrapolation procedure is required to do so robustly without compromising efficiency.

In \cite{wala20193d}, the authors demonstrate a relationship between the truncation error of a \qbx expansion and the local curvature of $\Gammah$. 
Our scheme also is susceptible to this form of error and we do not address nor analyze this in this work.
This is a subtle problem that requires a detailed analysis of the surface geometry with respect to the chosen extrapolation scheme.
Another  limitation is the lack of an accurate error estimate to serve as an upsampling criteria in place of the criteria in \cref{sec:adaptive_upsampling}, such as \cite{klinteberg2019accurate}. 
Extending \cite{klinteberg2019accurate} to \threed surfaces is non-trivial and whether the size of $\Pfine$ would be reduced enough to outweigh the added cost of the additional Newton iterations required by their scheme remains to be seen.

Finally, for certain accuracy targets and geometries, the algorithm above may lead to an impractically high number of patches in $\Pcoarse$ and $\Pfine$. 
Geometries with nearly-touching non-local regions, as shown in \cref{fig:torii}, will see large amounts of refinement.
If the nearly-touching embeddings $\gamma_r$ are close enough, i.e., less than $10^{-10}$ apart, there is little hope of an accurate solution with a fixed computational budget.
We allow the user to enforce a minimal patch size $L_\lbl{min}$, limiting the time and memory consumption at the expense of not reaching the requested target accuracy.

\section{Complexity \label{sec:complexity}}
In this section, we summarize the complexity of the algorithms required by \qbkix. 
We present a detailed complexity analysis in \cite[Section 3]{morse2020bsupplementary}.
The input to our overall algorithm is a domain boundary $\Gamma$ with $\Ninit$ patches and boundary condition $f$.
The parameters that directly impact complexity are:

\begin{itemize}
\item The number of patches $N$ \emph{after} admissibility refinement.  
  This is a function of $\Ninit$, the geometry of $\Gamma$, the definition of $f$, and the choices of parameters $a$ and $b$ in check point construction. 
\item Quadrature order $q$ and the degree of smoothness $k$ of $\Gamma$ and $f$.
  We assume that $k$ is sufficiently high to obtain optimal error behavior for a given $q$ by letting $k=2q$ in \cref{eq:error_quad_gen_target}.
\item \qbkix interpolation order $p$. 
\item The numbers of evaluation points in different zones $\Nf$, $\Ni$, and $\Nn$, with $\Nt = \Nf+\Ni+\Nn$.
\end{itemize}

The complexity is also affected by the geometric characteristics of $\Gamma$ as described 
in \cite[Section 3]{morse2020bsupplementary}.

\begin{itemize}
\item \emph{Admissibility.} The complexity of this step is $O(\Ninit \log \Ninit)$,
  with constants dependent on $\alpha_0$, $\beta_0$ and $C_J$. 
  The logarithmic factor is due to use of an \aabb tree for closest surface point queries. 

\item \emph{Upsampling.}  The complexity of upsampling is $ O(\hat{m}  N \log(N))$, where $\hat{m}$ is the largest upsampling ratio.
  The logarithmic factor appears for similar reason to admissibility,
  with constants that depend on geometric parameters and the boundary condition through the error estimate of \cref{sec:error}.
        We show that the upsampling ratio is independent of $N$ in \cite[Section 3]{morse2020bsupplementary}.
  
\item \emph{Point marking.}  Identifying which zone an evaluation point belongs to ($\Omega_F, \Omega_I$ or $\Omega_N$) depends on $N$ and the total number of points to be classified $\Nt = \Nf + \Ni + \Nn$. 
  The complexity is $O(\Nt \log N)$ with constants dependent on geometric parameters, due to the cost of closest surface point queries. 
  
\item\emph{ Far, intermediate and near zone integral evaluation.}  The complexity of these components depends on $N$ and  $\Nf$, $\Ni$ and $\Nn$ respectively, with the general form $O(s_1 N + s_2 \mathcal{N}')$, where $\mathcal{N}'$ is the number of evaluation points in the corresponding class.  For the far field, $s_1 = s_2 = 1$.  For the intermediate evaluation,
    $s_1 = \hat{m} q^2$ and $s_2 =1$; finally, for the near zone, $s_1 = \hat{m}q^2$ and $s_2 = p$.
If $b$ is chosen appropriately, the intermediate and near zone error is $\etrg$. 

  \item \emph{\gmres solve.} Due to the favorable conditioning of the double-layer formulation in \cref{eq:int-eq}, \gmres converges rapidly to a solution in a constant number of iterations for a given $\Gamma$ that is independent of $N$. 
    This means that the complexity to solve \cref{eq:int-eq} is asymptotically equal (up to a constant dependent on $\Gamma$) to the complexity equal to a near-zone evaluation with $\Nn=N(q+1)^2$. 
\item \emph{Evaluation on uniform point distribution} In many applications, one would like the value of the solution $u$ due to a density $\phi$ at a collection of points uniformly distributed throughout the domain $\Omega$.
    When the number of such targets is chosen to match the resolution of the surface discretization, the overall complexity of solution evaluation is $O((\hat{m}+\hat{m}q^2)q^2N + N^{3/2})$.
\end{itemize}

\section{Results\label{sec:results}}
We now demonstrate the accuracy and performance of \qbkix to evaluate singular/near-singular layer potentials on various complex geometries to solve the integral equation in \cref{eq:int-eq} and evaluate the solution as defined in \cref{eq:double_layer}.

\subsection{Classical convergence with patch refinement}
We will first demonstrate the numerical convergence behavior of \qbkix.
As discussed in \cite[Section 3.1]{KBGN}, approximation-based schemes such as \qbkix do not converge classically but do so up to a controlled precision if $r$ and $R$ scale with proportional to the patch size. 
In order to observe classical convergence as we refine $\Pcoarse$, we must allow $R$ and $r$ to decrease slower than $O(L)$, such as with rate $O(\sqrt{L})$.
In this section, we choose the \qbkix parameters $a$ and $b$ proportional to $1/\sqrt{L}$ to achieve this and demonstrate numerical convergence with refinement of $L$.

In our examples, we use analytic solutions to \cref{eq:pde} obtained as sums of point charge functions of the form 
\begin{equation}
  u_c(\vx) = \sum_{i=1}^m G(\vx,\vy_i)\psi_i
  \label{eq:point-charge-solution}
\end{equation}
where the charge locations $\vy_i$ with strengths $\psi_i$ are outside of $\Omega$.
To construct specific solutions, we sample a sphere of radius one with point charges, as shown in \Cref{fig:greens-id-test-cases,fig:solver-conv-test-cases}.
We choose charge strengths $\psi_i$ randomly from $[0,1]^d$, where $d=1$ for Laplace problems and $d=3$ for Stokes and elasticity problems. 

We use the multipole order  $m=20$ with $5000$ points per leaf box for the  kernel-independent \fmm. 
This ensures that the \fmm error does not dominate;  sufficiently large number of points per leaf box is needed to minimize the additional error due to tree depth.
We choose a high quadrature order $q=20$, or 400 quadrature points per patch in $\Pcoarse$, relative to overall convergence order to satisfy the assumption in \cref{sec:adaptive_upsampling}.
We also use two levels of uniform upsampling to demonstrate convergence.

\subsubsection{Green's Identity}
\begin{figure}[!htb]
  \centering
  \setlength\figureheight{1.9in}
  \setlength\figurewidth{2.1in}
  \begin{minipage}{.5\textwidth}
      \centering
    \includegraphics[width=.75\linewidth]{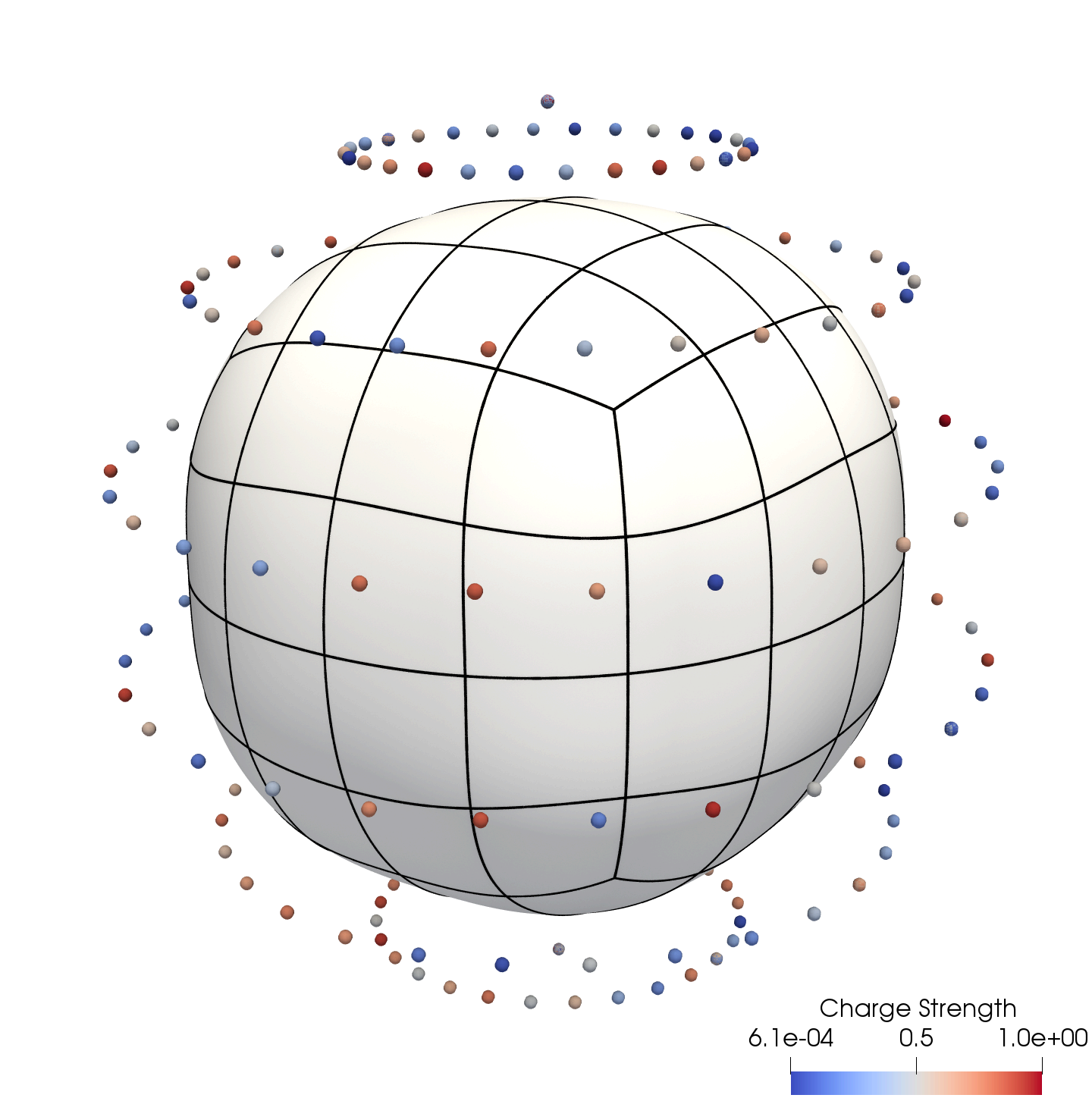}
  \end{minipage}\hfill
  \begin{minipage}{.5\textwidth}
      \centering
    \includegraphics[width=.75\linewidth]{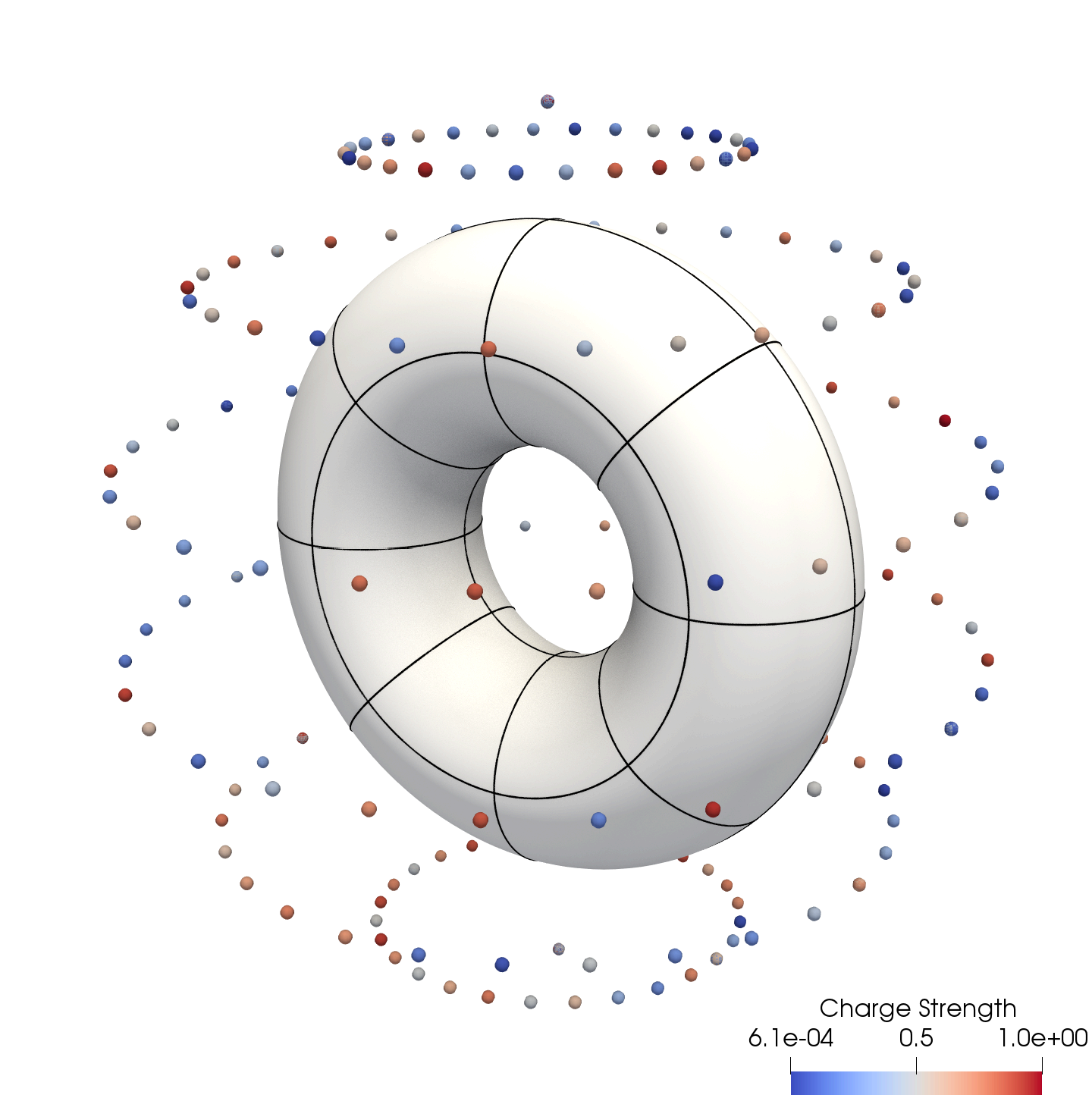}
  \end{minipage}\hfill
  \mcaption{fig:greens-id-test-cases}{}{Geometry and singularities used for Green's Identity convergence tests. 
  Shown are polynomial patches defining boundary geometry (black lines) and point singularities
  placed on the surface on a sphere of radius one.
  Singularity strengths are randomly selected values in $[0,1]$; shown is the strength intensity for Laplace problems, which varies from blue to red.
  We use 96 20th-order polynomial patches for the spheroid (left) and 32 cubic patches for the torus (right).
}
\end{figure}

\begin{table}[!htb]
\centering
\small
\setlength\tabcolsep{4pt}
\begin{tabular}{lllllll}
\toprule
{}Geometry & \pde &\multicolumn{4}{c}{Relative $\ell^\infty$ error (Number of patches)} & EOC\\
\midrule
Spheroid                                    & Laplace         &  $1.06\times 10^{-4}$ (96) &  $4.78\times 10^{-6}$ (384) &  $9.14\times 10^{-8}$ (1536) &  $4.35\times 10^{-9}$ (6144) & 4.77\\
(\cref{fig:greens-id-test-cases}-left)  & Elasticity      &  $1.68\times 10^{-3}$ (96) &  $6.94\times 10^{-5}$ (384) &  $1.53\times 10^{-6}$ (1536) &  $1.33\times 10^{-8}$ (6144) & 5.74\\
                                        & Stokes          &  $1.92\times 10^{-3}$ (96) &  $7.95\times 10^{-5}$ (384) &  $1.74\times 10^{-6}$ (1536) &  $1.53\times 10^{-8}$ (6144) & 5.72 \\
 \midrule
Torus                                         & Laplace         &  $2.05\times 10^{-3}$ (32) &  $7.52\times 10^{-5}$ (128) &   $3.79\times 10^{-6}$ (512) &  $8.48\times 10^{-8}$ (2048) & 5.45\\
 (\cref{fig:greens-id-test-cases}-right)      & Elasticity      &  $4.38\times 10^{-2}$ (32) &  $1.17\times 10^{-3}$ (128) &   $5.08\times 10^{-5}$ (512) &  $1.42\times 10^{-6}$ (2048) & 5.09\\
                                              & Stokes          &  $5.03\times 10^{-2}$ (32) &  $1.33\times 10^{-3}$ (128) &   $5.81\times 10^{-5}$ (512) &  $1.65\times 10^{-6}$ (2048) & 5.09\\
\bottomrule
\end{tabular}
\mcaption{table:greens-id-data}{$\ell^\infty$ Relative error in Green's Identity versus number of patches}{
  The solution to \cref{eq:pde} due to a known function $u_c$, shown in \cref{fig:greens-id-test-cases} is computed via Green's Identity.
  We evaluate the single- and double-layer potentials with \qbkix due to the Dirichlet and Neumann boundary data and compare against the known value of $u_c$ on the boundary.
  Each column is the result of an additional level of uniform quadrisection of the patches in $\Pcoarse$. 
  The final column (EOC) is the estimated convergence order, computed via least-squares log-log fit of the error as a function of max patch size.
}
\end{table}

We report the accuracy of the \qbkix evaluation scheme in \cref{table:greens-id-data}, where we verify Green's Identity for a random known function $u_c$ in \cref{eq:point-charge-solution}.
We evaluate the Dirichlet and Neumann boundary data due to $u_c$ at the discretization points of $\Gammah$ and use one-sided \qbkix to evaluate the corresponding single- and double-layer potentials at the same discretization points.
With each column of \cref{table:greens-id-data}, we subdivide $\Pcoarse$ to more accurately resolve the boundary condition.
The error shown in \cref{table:greens-id-data} is the $\ell^\infty$-relative error in the solution value
\begin{equation}
  \frac{\left\|\hat{S}\left[\frac{\partial u_c}{\partial \vn}\right](\vx) - \hat{D} \left[ u_c \right](\vx) - u_c(\vx)\right\|_\infty}{\|u_c\|_\infty},
\end{equation}
where $\hat{S}$ and $\hat{D}$ are the single- and double-layer singular integral operators discretized and evaluated with \qbkix.
In these tests, we choose $p=6$, $r=.004\sqrt{L}$ ($a=.004/\sqrt{L}$) and $R=.03\sqrt{L}$ ($b=.03/\sqrt{L}$).
We observe roughly $5$th order convergence on both the spheroid and torus test geometries in \cref{fig:greens-id-test-cases} for each of the tested \pde's.
In \cref{table:greens-id-perf-data}, we present the number of target points evaluated per second per core with one-sided \qbkix. 
We see that performance is best for Laplace and worst for elasticity problems, as expected. 
\begin{table}[!htb]
\centering
\small
\setlength\tabcolsep{8pt}
\begin{tabular}{llllll}
\toprule
{}Geometry & \pde &\multicolumn{4}{c}{Target points/second/core}\\
\midrule
Spheroid                                    & Laplace         &  $3684$ &  $5438$ &  $5077$ &  $5629$ \\
(\cref{fig:greens-id-test-cases}-left)  & Elasticity      &  $1325$ &  $1731$ &  $1687$ &  $1790$ \\
                                        & Stokes          &  $1635$ &  $2075$ &  $2016$ &  $2120$ \\
 \midrule
Torus                                         & Laplace         &  $2729$  &  $3373$ &   $4564$  &  $5477$ \\
 (\cref{fig:greens-id-test-cases}-right)      & Elasticity      &  $984$   &  $1171$ &   $1347$  &  $1502$ \\
                                              & Stokes          &  $1134$  &  $1331$ &   $1609$  &  $1727$ \\
\bottomrule
\end{tabular}
\mcaption{table:greens-id-perf-data}{Performance of singular evaluation in Green's Identity}{
    For each test in \cref{table:greens-id-data}, we report the number of target points evaluated with one-sided \qbkix per second per core.
}
\end{table}

\subsubsection{Solution via \gmres}

\begin{figure}[!htb]
  \centering
  \setlength\figureheight{1.9in}
  \setlength\figurewidth{2.1in}
  \hfill
  \begin{minipage}{.5\textwidth}
      \centering
    \includegraphics[width=.75\linewidth]{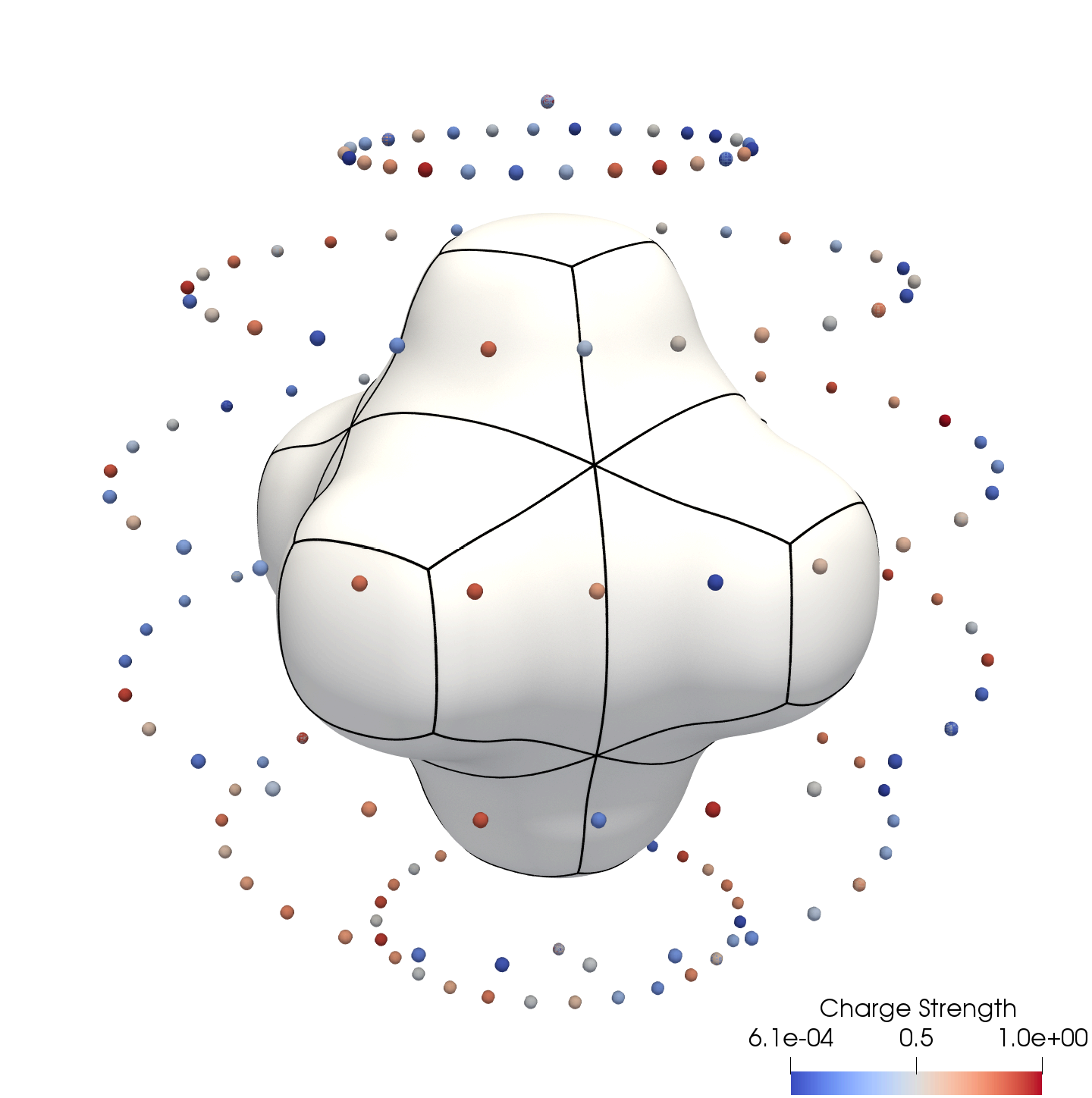}
  \end{minipage}\hfill
  \begin{minipage}{.5\textwidth}
      \centering
    \includegraphics[width=.75\linewidth]{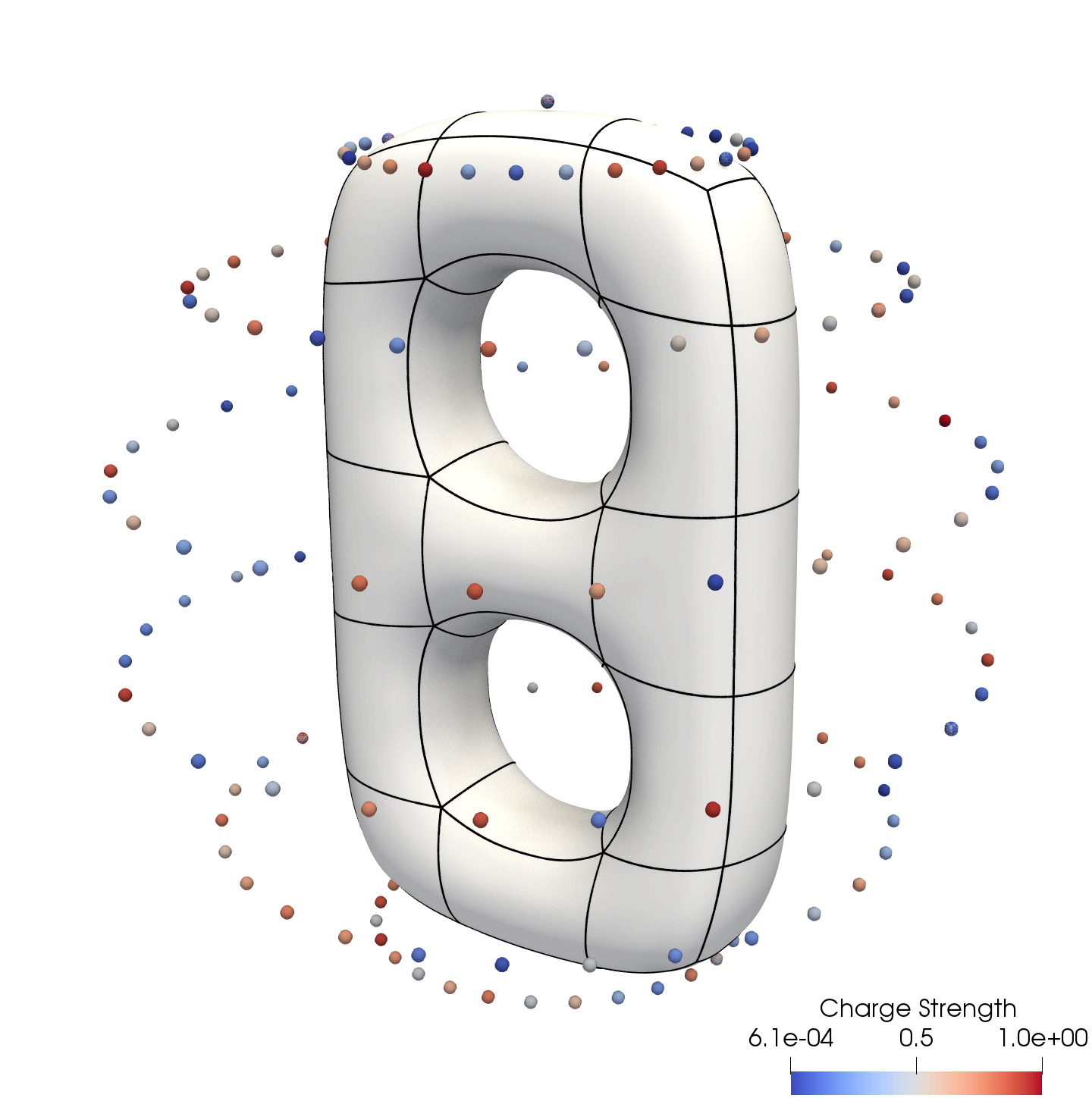}
  \end{minipage}\hfill
  \mcaption{fig:solver-conv-test-cases}{}{Geometry and singularities used for solver convergence tests. Figures are similar to \cref{fig:greens-id-test-cases}, but displaying geometries for testing the convergence of \qbkix within a \gmres solver.
  We use 30 16th-order polynomial patches for the pipe (left) and 50 20th-order patches for the genus two surface (right). 
  Note the proximity of the singularities to the domain of the genus two surface; the nearest singularity is less than $.05L$ from $\Gammah$.
}
\end{figure}

\begin{table}
\centering
\small
\setlength\tabcolsep{4pt}
\begin{tabular}{lllllll}
\toprule
{}Geometry & \pde &\multicolumn{4}{c}{Relative $\ell^\infty$ error (Number of patches)} & EOC\\
\midrule
Spheroid (\cref{fig:greens-id-test-cases}-left) & Laplace   &  $2.70\times 10^{-6 }$ (96) &  $1.92\times 10^{-7 }$ (384) &  $4.47\times 10^{-9 }$ (1536) &  $5.13\times 10^{-11 }$ (6144) & 5.35 \\
   \midrule
Pipe                                       & Laplace       &  $5.99\times 10^{-4 }$ (30) &  $3.03\times 10^{-5 }$ (120) &   $6.68\times 10^{-7 }$ (480) &  $2.27\times 10^{-8 }$ (1920) & 5.92\\
(\cref{fig:solver-conv-test-cases}-left)   & Elasticity    &  $7.17\times 10^{-2 }$ (30) &  $3.57\times 10^{-3 }$ (120) &   $8.90\times 10^{-5 }$ (480) &  $4.14\times 10^{-6 }$ (1920) & 5.45\\
                                           & Stokes        &  $8.53\times 10^{-2 }$ (30) &  $4.12\times 10^{-3 }$ (120) &   $1.03\times 10^{-4 }$ (480) &  $4.73\times 10^{-6 }$ (1920) & 5.43\\
   \midrule
Genus 2                                     & Laplace     &  $4.00\times 10^{-2 }$ (50) &  $1.25\times 10^{-4 }$ (200) &   $1.54\times 10^{-6 }$ (800) &  $5.73\times 10^{-10 }$ (3200) & 8.76 \\
(\cref{fig:solver-conv-test-cases}-right)   & Elasticity  &  $9.20\times 10^{-2 }$ (50) &  $1.05\times 10^{-3 }$ (200) &   $1.00\times 10^{-5 }$ (800) &  $9.44\times 10^{-8 }$ (3200) & 6.89\\
                                            & Stokes      &  $1.03\times 10^{-1 }$ (50) &  $1.18\times 10^{-3 }$ (200) &   $1.15\times 10^{-5 }$ (800) &  $1.03\times 10^{-7}$ (3200)& 6.88\\
\bottomrule
\end{tabular}
\mcaption{table:solver-data}{$\ell^\infty$ Relative error in \gmres solve and solution evaluation versus number of patches}{
  We solve \cref{eq:pde} by discretizing and evaluating the layer potential in the integral equation in \cref{eq:int-eq} as described in \cref{sec:singular-eval}.
  We use two-sided \qbkix inside of \gmres to solve for $\phi$, then evaluate \cref{eq:double_layer_disc} with one-sided \qbkix at a new set of points on $\Gammah$.
  Each column is the result of an additional level of uniform quadrisection of the patches in $\Pcoarse$. 
  The final column (EOC) is the estimated convergence order, computed via least-squares log-log fit of the error as a function of max patch size.
}
\end{table}

\begin{table}[!htb]
\centering
\small
\setlength\tabcolsep{8pt}
\begin{tabular}{llllll}
\toprule
{}Geometry & \pde &\multicolumn{4}{c}{Target points/second/core}\\
\midrule
Spheroid                                    & Laplace         &  $2737$ &  $3149$ &  $2846$ &  $2950$ \\
\midrule
Pipe                                    & Laplace         &  $3046$ &  $2178$ &   $2832$  &  $2982$ \\
(\cref{fig:greens-id-test-cases}-left)  & Elasticity      &  $991$  &  $993$  &   $1189$  &  $1261$  \\
                                        & Stokes          &  $1048$ &  $1140$ &   $1335$  &  $1422$  \\
 \midrule
Genus 2                                       & Laplace         &  $1862$ &  $2886$ &  $3122$  &  $2879$ \\
 (\cref{fig:greens-id-test-cases}-right)      & Elasticity      &  $729$  &  $1125$ &  $1255$  &  $1295$ \\
                                              & Stokes          &  $929$  &  $1304$ &  $1450$  &  $1504$ \\
\bottomrule
\end{tabular}
\mcaption{table:solver-perf-data}{Performance of singular evaluation in \gmres matrix-vector multiply}{
    For each test in \cref{table:solver-data}, we report the number of target points per second per core evaluated with two-sided \qbkix in a single \gmres matrix-vector multiplication.
}
\end{table}
We report the accuracy of the \qbkix scheme when used to solve \cref{eq:pde} via the integral equation in \cref{eq:int-eq}.
Two-sided \qbkix is used in the matrix-vector multiply inside \gmres to solve \cref{eq:int-eq} for the values of the density $\phi$ at the discretization points.
Then one-sided \qbkix is used to evaluate \cref{eq:double_layer_disc} at a slightly coarser discretization. 
Since \gmres minimizes the residual at the original discretization of \cref{eq:int-eq}, this final step prevents an artificially accurate solution by changing discretizations.
\cref{table:solver-data} lists the $\ell^\infty$ relative error values for the total solve and evaluation steps using \cref{sec:singular-eval} as we refine $\Pcoarse$ by subdivision as in the previous section.
In these tests, we choose $p=6$, $r= .005\sqrt{L}$ ($a = .005/\sqrt{L}$), and  $R= .03\sqrt{L}$ ($b = .03/\sqrt{L}$).
As for previous examples, we observe at least $5$th order convergence on all tested geometries in \cref{fig:solver-conv-test-cases} and \cref{fig:greens-id-test-cases}-left and all \pde's.
We include the spheroid example as an additional demonstration of a high accuracy solution via \gmres with our approach.
We report the number of target points evaluated per second per core with two-sided \qbkix in \cref{table:solver-perf-data}.
The results are similar to \cref{table:greens-id-perf-data}; the slower performance is because evaluation via two-sided \qbkix is more expensive than one-sided \qbkix.
\subsection{Comparison with \cite{YBZ}\label{sec:results-compare}}
In this section,  we compare our method to \cite{YBZ}, a previously proposed high-order, kernel-independent singular quadrature method in \threed for complex geometries. These characteristics are similar to  \qbkix shares these characteristics. \cite[Section 4]{morse2020bsupplementary} presents additional comparisons.


The metric we are interested is \textit{cost for a given relative error}.
Assuming the surface discretization is $O(N)$, we measure the cost of a method as its total wall time during execution $T$ divided by the total wall time of an \fmm evaluation on the same $O(N)$ discretization, $T_\lbl{FMM}$. 
By normalizing by the \fmm evaluation cost, we minimize the dependence of the cost on machine- and implementation-dependent machine-dependent parameters.

We run the tests in this section on the spheroid geometry shown in \cref{fig:greens-id-test-cases}-left.
We focus on the singular quadrature scheme of \cite{YBZ}. 
The near-singular quadrature of \cite{YBZ} is algorithmically similar to \qbkix, but since an expensive singular quadrature rule is used as a part of near-singular evaluation, it has a higher total cost.
As a result, the accuracy and cost of near-singular evaluation of \cite{YBZ} is bounded by the accuracy and cost of the singular integration scheme.

To compare the full \qbkix method with \cite{YBZ}, we fit polynomial patches to the $C^\infty$ surface of \cite{ying2004simple}, denoted $\Gamma_b$, to produce $\Gammah$ during the first step of \cref{sec:admissible_algo}.
We apply the remaining geometry preprocessing algorithms of \cref{sec:admissible_algo} to $\Gammah$ to produce $\Pcoarse$.
After producing $\Pfine$ with two levels of uniform upsampling, we solve \cref{eq:int-eq} with two-sided \qbkix on $\Gammah$ and evaluate the solution on the boundary with one-sided \qbkix.
We then solve for the solution to \cref{eq:int-eq} on $\Gamma_b$ using \cite{YBZ}. 

For each of the tests in this section, we choose some initial spacing parameter
$h_0$ to discretize the surface of \cite{ying2004simple}, as in \cite{YBZ}, and use the $16\times$ upsampled grid and floating partition of unity radius proportional to $O(\sqrt{h})$, as in the original work.
We apply \qbkix to $\Gammah$ and the scheme of \cite{YBZ} to $\Gamma_b$ with spacing $h_0/2^i$, for $i=1,\hdots 4$.

As in the previous section, we choose the parameters $r$ and $R$ of \qbkix to be $O(\sqrt{L})$. 
For both quadrature methods, we use a multipole order of $16$ for \pvfmm with at most 250 points in each leaf box.
\begin{figure}[!htb]
  \centering
  \setlength\figureheight{1.9in}
  \setlength\figurewidth{2.1in}
    \makebox[\textwidth][c]{ \includegraphics[width=1.1\textwidth]{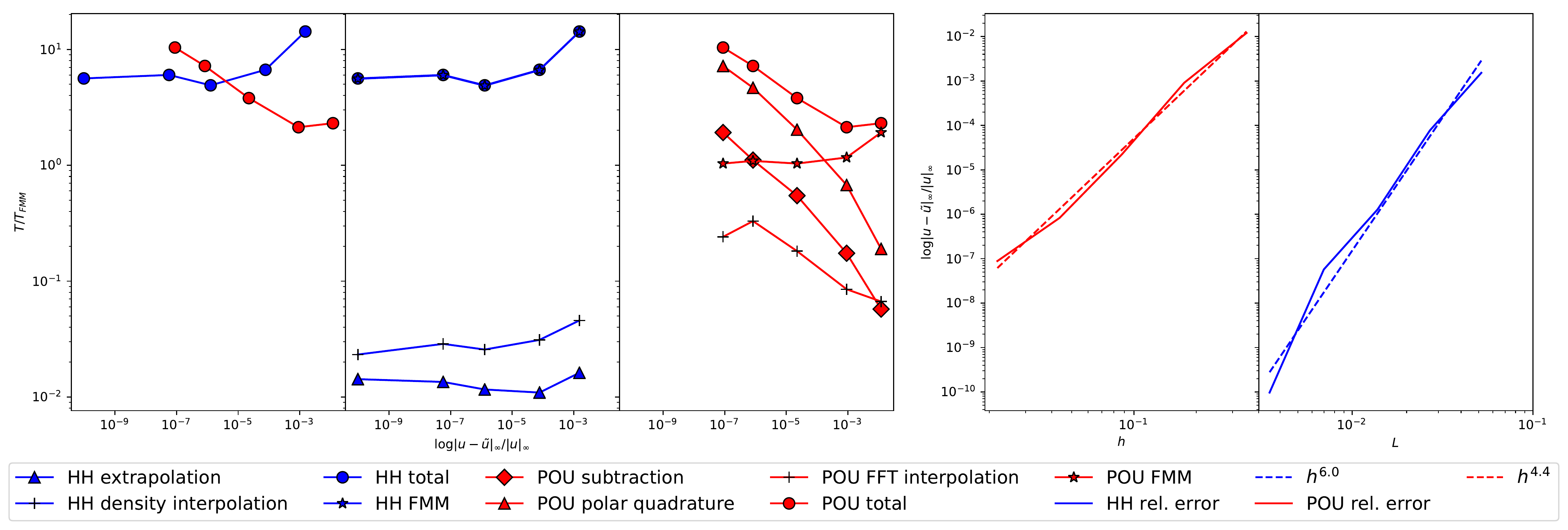} }
    \makebox[\textwidth][c]{ \includegraphics[width=1.1\textwidth]{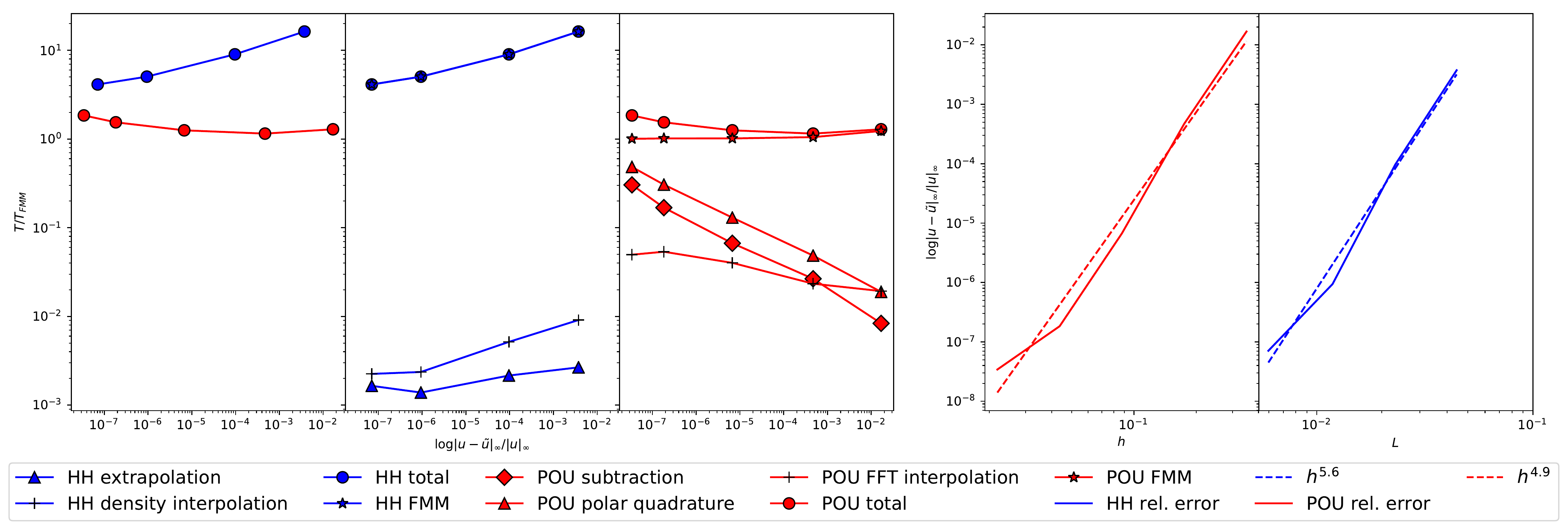} }
    \mcaption{fig:compare-solve-surface}{Comparison of \qbkix on polynomial patches (HH) versus \cite{YBZ} on the surface representation of \cite{ying2004simple} (POU) solving via \gmres for $u_c$}{
    Laplace (top) and elasticity (bottom) problems solved on the spheroid shown in \cref{fig:greens-id-test-cases}.
From left to right, we plot the total cost of each scheme, the cost of each subroutine for \qbkix (blue) and the singular quadrature scheme of \cite{YBZ} (red), and the relative error as a function of $h$.
    We plot error convergence of \cite{YBZ} as a function of $h$ and \qbkix as a function of $L$, due to the distinct discretizations.
    For \qbkix parameters, we choose $r=.013\sqrt{L}$, $R=.075\sqrt{L}$ for the Laplace problem; for the elasticity problem, we choose $r=.013\sqrt{L}$, $R=.08\sqrt{L}$. We choose $p=6$ and $q=15$ for both problems.
    For \cite{YBZ} the spacing is $h_0=.35$.
    Note that in the \qbkix timing breakdown, since the \fmm time is dominant, the \fmm cost lies directly on top of the total cost.
  }
\end{figure}
The results are shown in \cref{fig:compare-solve-surface}. 
From left to right, each plot details the total cost of each scheme, the cost of each subroutine for \qbkix (denoted \abbrev{HH}) and the singular quadrature scheme of \cite{YBZ} (denoted \abbrev{POU}), and the relative error as a function of $h$ and $L$, respectively, 
for all refinement levels. 
We plot the cost of both schemes the cost of each algorithmic step as a function of their computed relative error. 
In each figure, we present results for a Laplace problem (top) and an elasticity problem (bottom). 

In \cref{fig:compare-solve-surface}, as expected, we observe a  higher  convergence rate for \qbkix compared to \cite{YBZ}. 
\cite{YBZ} outperforms \qbkix in terms of cost for all tested discretizations.  
 We observe that the \fmm evaluation in \cref{fig:compare-solve-surface} accounts for at least 95\% of the \qbkix cost.
This means that a local singular quadrature method (based on corrections to an \fmm evaluation, \cref{sec:related_work}) of \textit{worse} complexity can beat a global method, simply by virtue of reducing the \fmm size.
By noting the large difference between the \qbkix \fmm cost and the \qbkix density interpolation, we can reasonably infer that a local \qbkix scheme should narrow this  performance gap and outperform \cite{YBZ} for larger problems, assuming that switching to a local scheme does not dramatically affect error convergence. 


\subsection{Requested target precision vs. computed accuracy}
\begin{figure}[!htb]
\begin{tikzpicture}
    \begin{loglogaxis}[
        xlabel=$\err{target}$,
        ylabel= $\infty$-norm relative error,
        ylabel near ticks,
        xlabel near ticks,
        label style={font=\scriptsize},
        tick label style={font=\scriptsize},
        width=.333\linewidth
    ]
    \addplot[smooth,mark=*,blue] plot coordinates {
        (1e-4,0.0003143683435301948)
        (1e-5,6.031924968205908e-05)
        (1e-6,4.103514547039301e-06)
        (1e-7,6.614064613404895e-07)
        (1e-8,1.122384348756594e-07)
    };
        \addplot[domain=1e-8:1e-4,dotted,thick]{x};

    \end{loglogaxis}
    \end{tikzpicture}
    \begin{tikzpicture}
    \begin{semilogxaxis}[
        xlabel=$\err{target}$,
        ylabel= Target points/second/core,
        ylabel near ticks,
        xlabel near ticks,
        label style={font=\scriptsize},
        tick label style={font=\scriptsize},
        width=.333\linewidth
    ]
    \addplot[smooth,mark=*,blue] plot coordinates {
        (1e-4,2077)
        (1e-5,1620)
        (1e-6,772)
        (1e-7,400)
        (1e-8,222)
    };
    \end{semilogxaxis}
    \end{tikzpicture}
    \begin{tikzpicture}
    \begin{loglogaxis}[
        xlabel=$\err{target}$,
        ylabel= Number of patches,
        ylabel near ticks,
        xlabel near ticks,
        label style={font=\scriptsize},
        tick label style={font=\scriptsize},
        legend style={font=\tiny},
         legend image post style={scale=0.5},
        width=.333\linewidth
    ]
    \addplot[smooth,mark=*,blue] plot coordinates {
        (1e-4,128)
        (1e-5,128)
        (1e-6,128)
        (1e-7,128)
        (1e-8,128)
    };
    \addlegendentry{$|\Pcoarse|$}
    \addplot[smooth,mark=*,red] plot coordinates {
        (1e-4,5408)
        (1e-5,8288)
        (1e-6,28256)
        (1e-7,60032)
        (1e-8,113936)
    };
    \addlegendentry{$|\Pfine|$}
    \end{loglogaxis}
    \end{tikzpicture}
        \mcaption{fig:full-algo-perf}{Performance of full algorithm}{
            Left: $\infty$-norm relative error in singular integral vs requested target accuracy (blue). 
        The dotted line is the ideal behavior $y=x$.
        Middle: Performance in terms of target points evaluated per second per core with \qbkix.
        Right: Number of patches in $\Pcoarse$ and $\Pfine$ computed by the preprocessing algorithms.}
\end{figure}

In this section, we study the performance of the full algorithm outlined in \cref{sec:algo}.
We test \qbkix on the torus domain shown in \cref{fig:greens-id-test-cases}-right.
We choose a reference solution of the form of \cref{eq:point-charge-solution} with a single point charge located at the origin, in the middle of the hole of the torus.
We solve the integral equation with two-sided \qbkix and evaluate the singular integral on a distinct discretization with one-sided \qbkix.
We choose $q=20$, $p=6$ and $a=b/6$.
We select various values for $\err{target}$ using the plot in \cref{fig:extrap-err-p6} to choose $b$ to ensure sufficiently accurate extrapolation. 
We plot the results of our tests in \cref{fig:full-algo-perf}.

We see in \cref{fig:full-algo-perf}-left that we are consistently close to the requested target precision. 
We see a decline in target points per second per core as accuracy increases in \cref{fig:full-algo-perf}-middle.
This is explained by \cref{fig:full-algo-perf}-right, which shows an increase in the size $\Pfine$ as $\Pcoarse$ remains a fixed size.
The initial 128 patches in $\Pcoarse$ are enough to resolve the boundary condition and $\Gamma$, but we need greater quadrature accuracy for lower values of $\err{target}$ .
Decreasing the number of points in passed to the \fmm, i.e., decreasing the size of $\Pfine$, is the main way to improve performance of our method.
This is further indication that a local version of \qbkix will outperform a global approach.

\subsection{Full algorithm on interlocking torii\label{sec:results-torii}}
We now demonstrate the full algorithm pipeline on an exterior Laplace problem, whose boundary is defined by four interlocking torii shown in \cref{fig:torii}.
The domain boundary is contained in the box $[-3.8, 2.4] \times [-1.1, 1.1] \times [-1,1]$.
The shortest distance between two adjacent torii is less than 10\% of a polynomial patch length defining the boundary.
We again use a boundary condition of the form \cref{eq:point-charge-solution} with a single point charge located at $(0,.03,.875)$, inside the upper half of the second torus from the right in \cref{fig:torii}.
This problem is challenging due to the nearly touching geometry of the torii, along with the singularity placed close to the boundary.
We run the admissibility and adaptive upsampling algorithms outlined in \cref{sec:algo}, solve \cref{eq:int-eq} using two-sided \qbkix, and evaluate the solution on the boundary using one-sided \qbkix.
The absolute error in the $\infty$-norm of the singular evaluation is plotted on the boundary surface.

\begin{figure}[!htb]
  \centering
  \setlength\figureheight{1.9in}
  \setlength\figurewidth{2.1in}
  \hfill
  \begin{minipage}{.65\textwidth}
    \includegraphics[width=\linewidth]{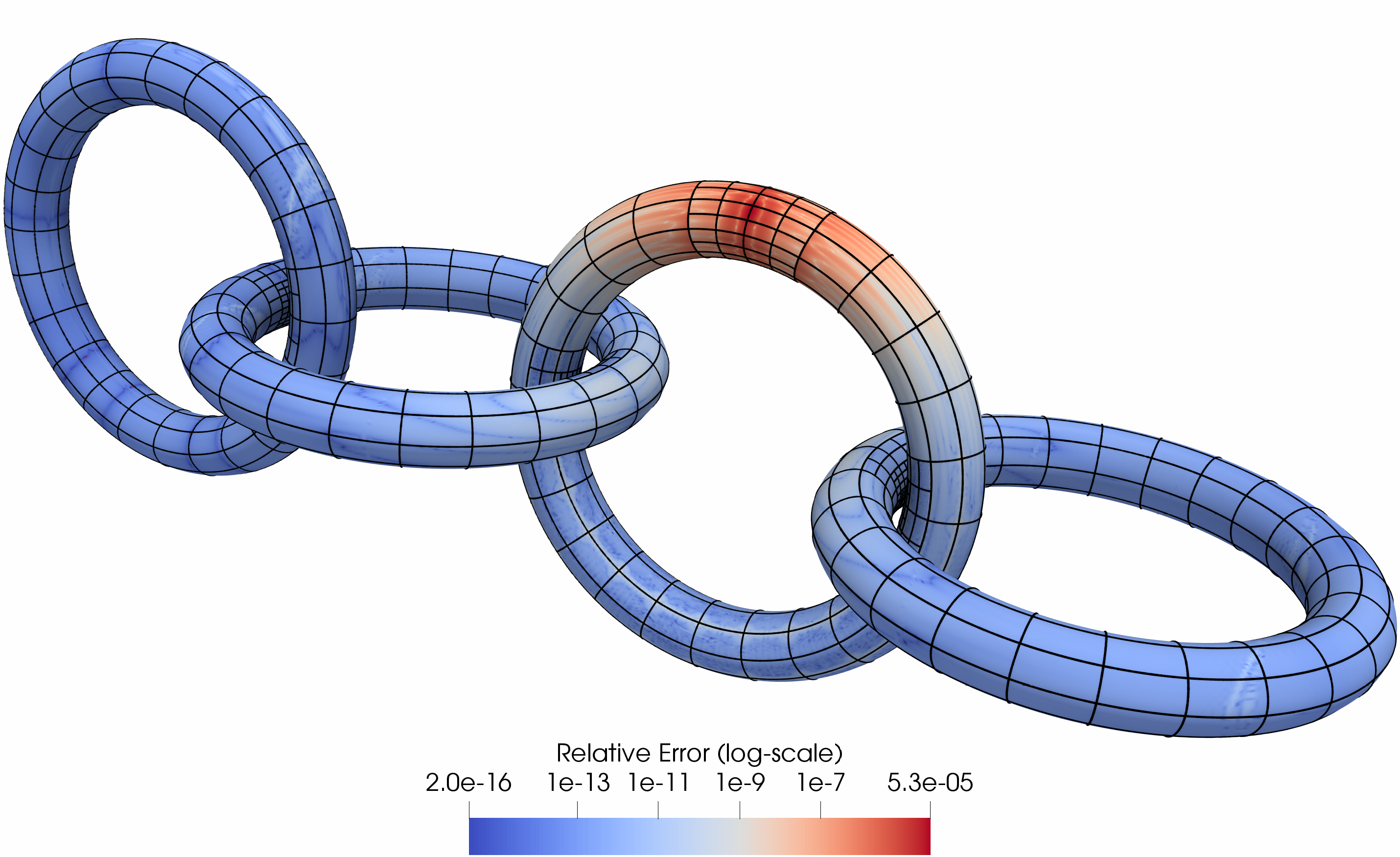}
  \end{minipage}\hfill
  \begin{minipage}{.35\textwidth}
    \includegraphics[width=\linewidth]{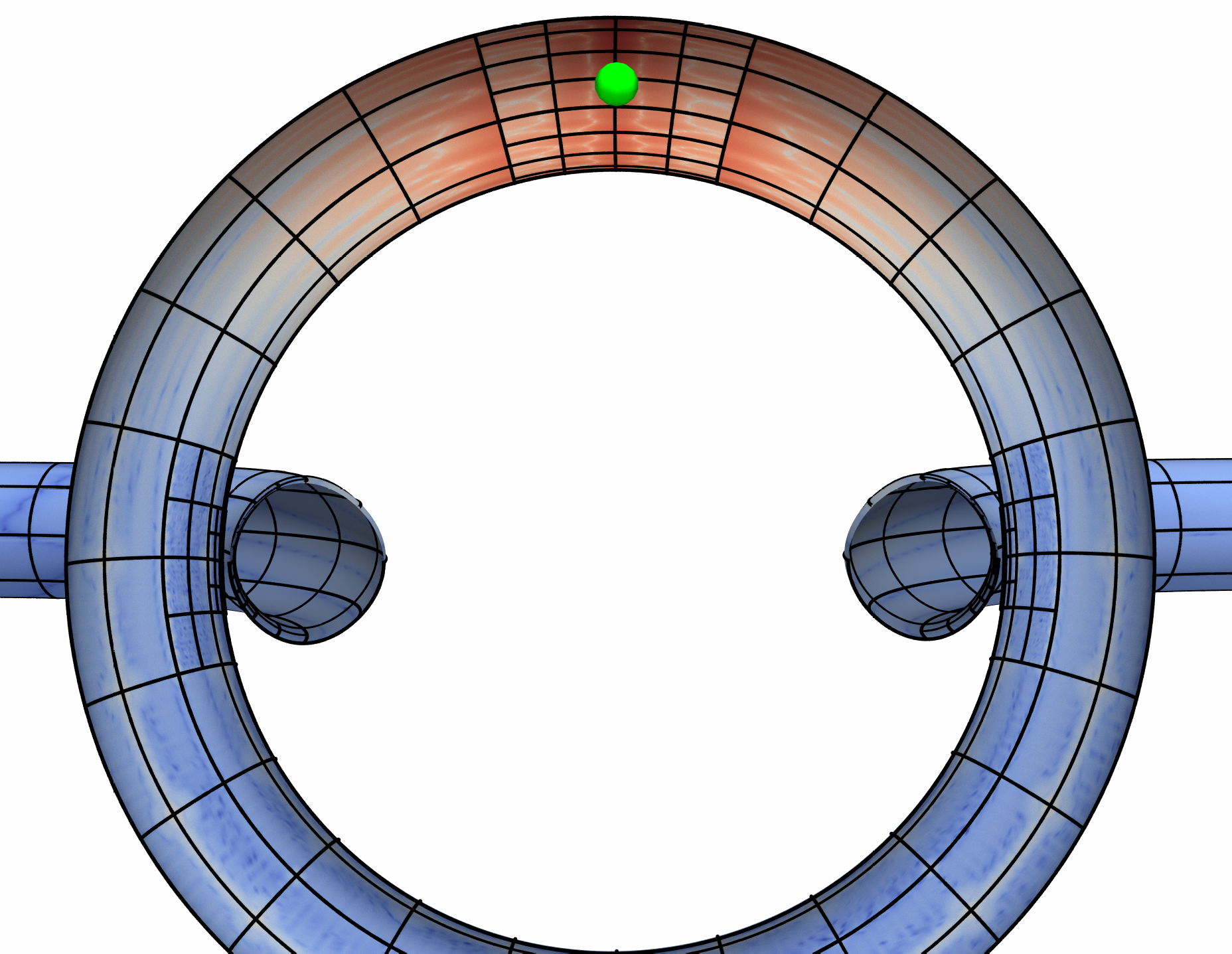}
  \end{minipage}\hfill
  \mcaption{fig:torii}{Absolute error of \gmres solve via \qbkix on interlocking torii}{Left: The admissible set of 1128 patches in $\Pcoarse$ used to solve \cref{eq:int-eq} is shown (black lines denote patch boundaries).
  The point charge generated the boundary condition is located within the second torus from the right. 
    Right: a cross-section of the torii geometry through the $xz$-plane, showing the second torus from the right and the location of the singularity (green point).
}
\end{figure}
Using $a=.1$, $b=.025$, $p=6$ and $q=20$, we achieve a maximum pointwise error of $1.29\times 10^{-5}$. 
\gmres was able to reduce the residual by a factor of $10^{-13}$ over 109 iterations.
There are 288768 quadrature points in the coarse discretization, 18235392 quadrature points in the fine discretization, and 3465216 check points used in the two-sided \qbkix evaluation inside \gmres.
We evaluate the solved density at 451200 points on the boundary with one-sided \qbkix to produce the render in \cref{fig:torii}.
On a machine with two Intel Xeon E-2690v2 3.0GHz \abbrev{CPU}'s, each with 10 cores, and 100 \abbrev{GB} of \abbrev{RAM},
the \gmres solve and interior evaluation required 5.7 hours and can evaluate the singular integral at a rate of 1709 target points per second per core.

\subsection{Solution on complex geometry\label{sec:results-blood-vessel}}

\begin{figure}[!htb]
  \centering
  \setlength\figureheight{1.9in}
  \setlength\figurewidth{2.1in}
  \hfill
  \begin{minipage}{.7\textwidth}
    \includegraphics[width=\linewidth]{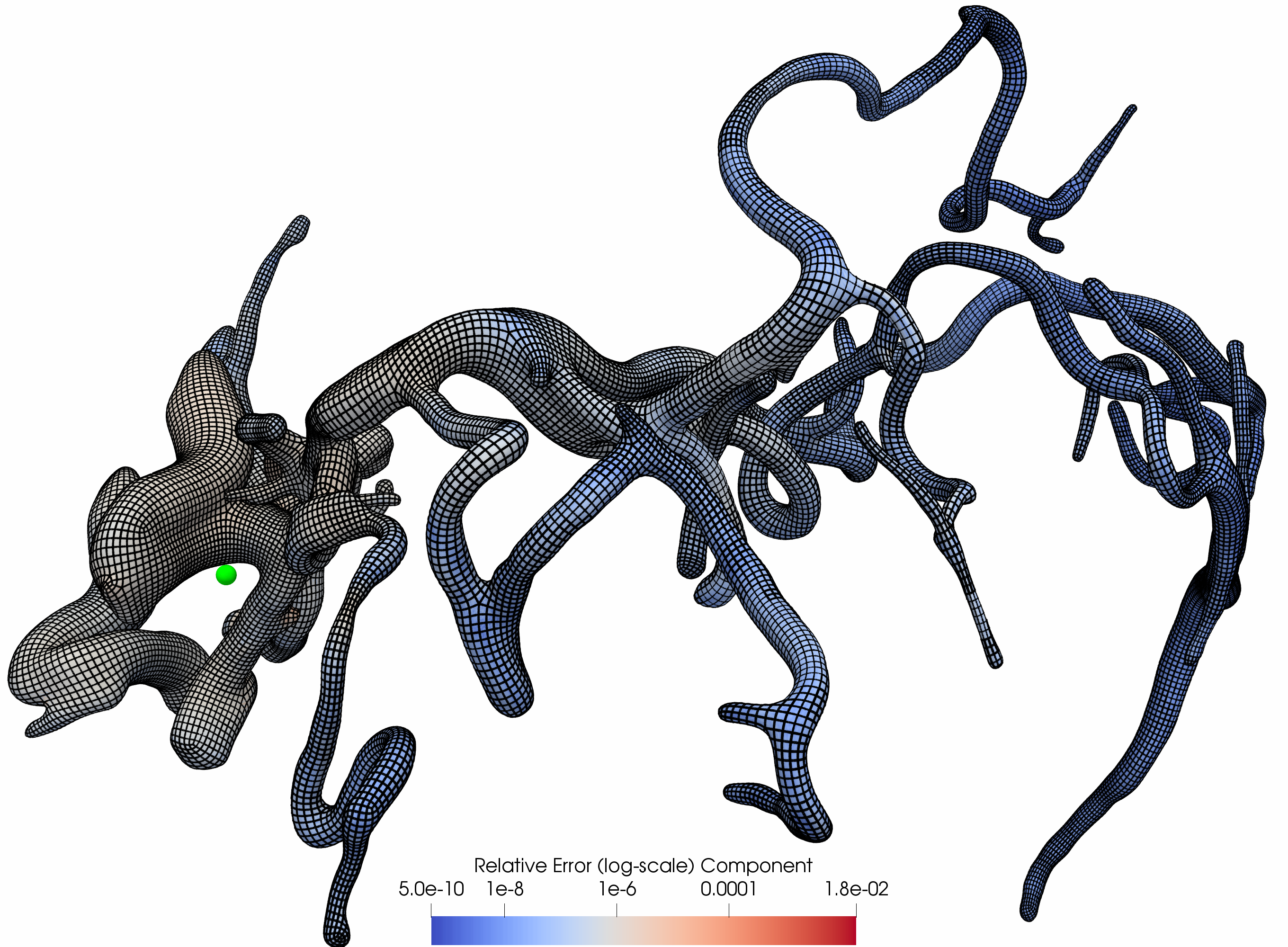}
  \end{minipage}\hfill
  \mcaption{fig:vessel}{Absolute error of \gmres solve via \qbkix on complex blood vessel geometry used in \cite{lu2019scalable}}{The blood vessel uses 40,960 8th order polynomial patches (black edges denote patch boundaries). The geometry is admissible by construction. 
    The point charge is located on left side of the figure (green)}
\end{figure}

We have demonstrated in \cite{lu2019scalable} a parallel implementation of \cref{sec:singular-eval}, applied to simulating red blood cell flows.
The surface geometry of the blood vessel shown in \cref{fig:vessel} is complex, with rapidly varying curvatures and geometric distortions due to singular vertices in the surface mesh.
Since the surface is admissible, we are able to apply parallel \qbkix directly without geometric preprocessing to solve an interior Dirichlet Stokes problem.
We use $a=.125$, $b=.125$, $p=6$ and $q=16$ as simulation parameters.

Using 32 machines each with twenty 2.6 Ghz cores with 100\abbrev{GB} of \abbrev{RAM}, we achieve a maximum pointwise error of $3\times 10^{-6}$ when solving a Stokes problem with constant density. 
We then place a random vector point charge two patch lengths away (relative to the patches in $\Pcoarse$) from the domain boundary (on the left side of \cref{fig:vessel}, solve \cref{eq:int-eq} using two-sided \qbkix, and evaluate the solution on the boundary using one-sided \qbkix.
The absolute error in the $\infty$-norm of the singular evaluation is plotted on the boundary surface.
There are 10,485,760 quadrature points in the coarse discretization, 167,772,160 quadrature points in the fine discretization, and 125,829,120 check points used in the two-sided \qbkix evaluation inside \gmres.
We evaluate the solved density at 209,715,200 points on the boundary with one-sided \qbkix to produce the render in \cref{fig:torii}.
We achieve a maximum pointwise error of $1.8\times 10^{-2}$ and can evaluate the singular integral at rate of 3529 target points per second per core. 

\section{Conclusion\label{sec:conclusion}}
We have presented \qbkix, a fast, high-order, kernel-independent, singular/near-singular quadrature scheme for elliptic boundary value problems in \threed on complex geometries defined by piecewise tensor-product polynomial surfaces.
The primary advantage of our approach is \textit{algorithmic simplicity}: the algorithm can implemented easily with an existing smooth quadrature rule, a point \fmm and \oned and \twod interpolation schemes.
We presented fast geometry processing algorithms to guarantee accurate singular/near-singular integration, adaptively upsample the discretization and query local surface patches.
We then evaluated \qbkix in various test cases, for Laplace, Stokes, and elasticity problems on various patch-based geometries and compared our approach with \cite{YBZ}.

\cite{lu2019scalable} demonstrates a parallel implementation of \qbkix, but the geometric preprocessing and adaptive upsampling algorithms presented in \cref{sec:algo} are not parallelized.
This is a requirement to solve truly large-scale problems that exist in engineering applications.
Our method can also be easily restructured as a local method.
The comparison in \cref{sec:results-compare} highlights an important point: a local singular quadrature method can outperform a global method for moderate accuracies, \textit{even when the local scheme is asymptotically slower}.
This simple change can also dramatically improve both the serial performance and the parallel scalability of \qbkix shown in \cite{lu2019scalable}, due to the decreased communication of a smaller parallel \fmm evaluation.
The most important improvement to be made, however, is the equispaced extrapolation.
Constructing a superior extrapolation procedure, optimized for the boundary integral context, is the main focus of our current investigations.

\section{Acknowledgements}
We would like to thank Michael O'Neil, Dhairya Malhotra, Libin Lu, Alex Barnett, Leslie Greengard, Michael Shelley for insightful conversations, feedback and suggestions regarding this work. 
We would also like to thank the NYU HPC team, and Shenglong Wang in particular, for great support throughout the course of this work, and the helpful feedback of the anonymous reviewers.
This work was supported by NSF grant DMS-1821334.

\appendix

\section{Derivation of \Cref{heuristic:error_quad_high_order} \label{app:proof_of_error_quad_high_order}}
We are interested in computing the error incurred when approximating a \twod surface integral with an interpolatory quadrature rule. 
In \oned on the interval $[-1,1]$, we're interested in the quantity
\begin{align}
  R_q[f] &= I[f] - Q_q[f] \label{eq:rem1d}\\
  \shortintertext{where}
  I[f] &= \int_{-1}^1 f(x) dx\\
  Q_q[f] &= \sum_{i=0}^q f(x_i)w_i,\\
\end{align}
for quadrature weights $w_i$ for a $q$-point quadrature rule.
For a \twod double integral, we define a similar relationship between the remainder, the exact integral and the $q$th order quadrature rule:
\begin{align}
  R^{(2)}_q[f] &= I^{(2)}[f] - Q^{(2)}_q[f]\label{eq:rem2d}\\
  \shortintertext{where}
    I^{(2)}[f] &= \int_{-1}^1\int_{-1}^1 f(s,t) dsdt\\
  Q^{(2)}_q[f] &= \sum_{j=0}^q\sum_{i=0}^q f(s_i,t_j)w_iw_j,
\end{align}

For a function of two variables $f(s,t)$, we will denote $I_s[f] = \int_{-1}^1f(s,\cdot)ds$ as integration with respect to the $s$ variable only, which produces a function of $t$.
The same subscript notation applies to $R_{q,s}[f]$ and $Q_{q,s}[f]$ and use similar notation  for $t$: we apply the \oned functional to the variable in the subscript, producing a \oned function in the remaining variable.
We observe that 
\begin{equation}
  I^{(2)}[f] = \int_{-1}^1\left(\int_{-1}^1 f(s,t) ds\right)dt = \int_{-1}^1 I_s[f] dt = I_t[I_s[f]]
  \label{eq:rem2d_to_1d}
\end{equation}
Following the discussion in \cite{aT2}, we substitute into \cref{eq:rem2d_to_1d} and have
\begin{align}
  I^{(2)}[f] &= I_t[R_{q,s}[f] + Q_{q,s}[f]]\\
         &= R_{q,t}[R_{q,s}[f] + Q_{q,s}[f]] + Q_{q,t}[R_{q,s}[f] + Q_{q,s}[f]] \\
         &= R_{q,t}[R_{q,s}[f]] + Q_{q,s}[R_{q,t}[f]] + Q_{q,t}[R_{q,s}[f]] +Q_{q,t}[Q_{q,s}[f]]
\end{align}
We assume that the higher-order ``remainder of remainder'' term contributes negligibly to the error.
Although it has been shown that this term has a non-trivial contribution to a tight error estimate \cite{elliott2015complete}, 
we are able to provide a sufficiently tight upper bound.
For large $q$, the quadrature rule approaches the value of the integral, i.e., $Q_{q,\beta} \approx I_\beta$ for $\beta =s,t$, we're left with:
\begin{align}
  I^{(2)}[f] &\approx  I_s[R_{q,t}[f]] + I_t[R_{q,s}[f]] +Q^{(2)}_q[f],\\
  \shortintertext{and hence:}
  R^{(2)}_q[f] &\lesssim  I_s[R_{q,t}[f]] + I_t[R_{q,s}[f]] \label{eq:rem2d_int},
\end{align}
where $\lesssim$ means "approximately less than or equal to."
From \cite[Theorem 5.1]{trefethen2008gauss}, we recall that for a \oned function $\theta$ defined on $[-1,1]$, if $Q_q[\theta]$ is computed with Clenshaw-Curtis quadrature, $\theta$ is $C^k$ and $\|\theta^{(k)}\|_T < V$ on $[-1,1]$ for real finite $V$, then for sufficiently large $q$, the following inequality holds
\begin{equation}
  R_q[\theta] \leq \frac{32V}{15\pi k(2q+1-k)^k},
  \label{eq:tref_ccquad2d}
\end{equation}
where $\|\alpha(x)\|_T = \|\alpha^\prime/\sqrt{1-x^2}\|_1$.
We're interested in integrating a function $\tilde{\theta}$ over an interval $[-h,h]$ for various $h$.
If $\tilde{\theta}$ is $C^k$ and $\|\tilde{\theta}\|_T < V^\prime$ on $[-h,h]$ for a real constant $V^\prime$ independent of $h$, then we can define $\theta(x) = \tilde{\theta}(hx)$ on $[-1,1]$ and apply 
\cref{eq:tref_ccquad2d}:
\begin{equation}
  R_q[\tilde{\theta}] \leq \frac{32h^{k+1}V^\prime}{15\pi k(2q+1-k)^k}.
  \label{eq:tref_ccquad2d_variable_interval}
\end{equation}
This follows directly from the proof of \cite[Theorem 4.2]{trefethen2008gauss} applied to $\theta$ by replacing $\theta$ with $\tilde{\theta}(hx)$ and noting that $\theta^{(k)}(x) = h^k \tilde{\theta}^{(k)}(hx)$.
The change of variables produces the first power of $h$, while each of the $k$ integration by parts produces an additional power of $h$.
In the context of \qbkix, the size of $h$ is proportional to the edge length of the subdomain $D_i$ outlined in \cref{sec:geom-def}.

Applying \cref{eq:tref_ccquad2d_variable_interval} to \cref{eq:rem2d_int}, and again letting $f(s,t) = \Theta(hs,ht)$, gives us
\begin{equation}
  R^{(2)}_q[f] \lesssim \frac{32h^{k+1}}{15\pi k (2q+1-k)^k}\left[I_s[V^\prime_t(s)] + I_t[V^\prime_s(t)]\right] \label{eq:quad_err_1d}
\end{equation}
where $V^\prime_t(s)=\max_t\|\Theta^{(k)}(hs, ht)\|_T$ and $V^\prime_s(t)=\max_s\|\Theta^{(k)}(hs, ht)\|_T$ for fixed values of $s,t$.
If we can choose a $\tilde{V}$ that is strictly greater than $V^\prime_s(t)$ and $V^\prime_t(s)$ for any $s,t$ in $\mathcal{I}^{(2)}$, we are left with
\begin{equation}
  R^{(2)}_q[f] \lesssim \frac{128h^{k+1}\tilde{V}}{15\pi k (2q+1-k)^k}.\label{eq:quad_err_final}
\end{equation}

Applying this to the integration of double layer potentials, we can simply let $\tilde{V}$ be the largest variation of the $k$th partial derivatives of the integrand of any single patch in \cref{eq:double_layer_patches}.
In fact, we know that this value is achieved at the projection of $\vx$ on the patch $P_i$ closest to $\vx$, i.e., $(s^*,t^*) = \mathrm{argmin}_{\mathcal{I}^{(2)}} \|\vx - P_i(s,t)\|_2$. 
 We can also choose $h = \max_i h_i$ to observe standard high-order convergence as a function of patch domain size, which we summarize in the following theorem.
  The smoothness and bounded variation assumptions required to apply \cref{eq:tref_ccquad2d} to our layer potential follow directly from the smoothness of $u(\vx)$ in $\Omega$.
  Our heuristic directly follows.

\bibliographystyle{alpha}

\bibliography{refs}

\end{document}